\newtheorem{theorem}{Theorem}[section]
\newtheorem{prop}{Proposition}[section]
\newtheorem{lemma}[theorem]{Lemma}
\theoremstyle{definition}
\theoremstyle{remark}
\newtheorem{remark}[theorem]{Remark}
\numberwithin{equation}{section}
\newcommand{\abs}[1]{\lvert#1\rvert}
\newcommand{\norm}[1]{\lVert#1\rVert}
\DeclareMathOperator{\im}{Im}
\newcommand{\ud}{\mathrm{d}}
\begin{document}

\title[Singular spectral measures for ergodic Jacobi matrices]{Singular components of spectral measures for ergodic Jacobi matrices}

\author{C. A. Marx}
\address{Department of Mathematics, University of California, Irvine CA, 92717}

\thanks{The work was supported by NSF Grant DMS - 0601081 and BSF, grant 2006483.}



\begin{abstract}
For ergodic 1d Jacobi operators we prove that the random singular components of any spectral measure are a.s. mutually disjoint as long as one restricts to the set of positive Lyapunov exponent. In the context of extended Harper's equation this yields the first rigorous proof of the Thouless' formula for the Lyapunov exponent in the dual regions.
\end{abstract}

\maketitle

\section{Introduction} \label{sec_intro}

Given two sequences 
\begin{eqnarray}
(a_n) \in \mathit{l}^\infty(\mathbb{Z};\mathbb{C}) ~\mbox{,} \abs{a_n} > 0 ~\forall n \in \mathbb{Z} ~\mbox{,} \\
(b_n) \in \mathit{l}^\infty(\mathbb{Z};\mathbb{R}) ~\mbox{,}
\end{eqnarray}
we define 
\begin{equation} \label{eq_Jacobiop_1}
(H \psi)_n := b_{n} \psi_{n} + a_{n} \psi_{n+1} + \overline{a_{n-1}} \psi_{n-1} ~\mbox{,} 
\end{equation}
a bounded self adjoint operator on $\mathcal{H}:=\mathit{l}^2(\mathbb{Z};\mathbb{C})$.

Operators of the form (\ref{eq_Jacobiop_1}) are known as 1 d - Jacobi operators (or Jacobi matrices), the most prominent and well studied case being Schr\"odinger operators where  $(a_n) = (1)$. 

We note that since (\ref{eq_Jacobiop_1}) is unitarily equivalent to a Jacobi operator with $a_n>0, ~\forall n$, one could assume all $a_n >0$ right from the start. In some situations it may however be more convenient to retain the original complex form, which is why we do not apply this unitary here. 

In this article, we shall be interested in an ergodic family of Jacobi operators, $(H_\omega)_{\omega \in \Omega}$, induced by random variables
$a,b$ on a probability space $(\Omega, \mathcal{F}, \mathbb{P})$,
\begin{eqnarray} 
a: \Omega \to \mathbb{C}, ~b:  \Omega \to \mathbb{R} ~\mbox{,}\\
\left\vert \mathbb{E}_\omega \log \abs{a} \right\vert < \infty ~\mbox{,} \label{eq_Jacobiop_2}
\end{eqnarray}
and an ergodic invertible map $T$ on $\Omega$ generating the random sequences $(a_n(\omega))$ and $(b_n(\omega))$ according to
\begin{equation} \label{eq_stationarity}
a_n(\omega) := a(T^n \omega) ~\mbox{,} ~ b_n(\omega) = b(T^n \omega) ~\mbox{.}
\end{equation}
\begin{remark} \label{rem_ergodic}
In particular,  (\ref{eq_stationarity}) together with (\ref{eq_Jacobiop_2})  imply that $a_n(\omega) \neq 0, ~\forall n \in \mathbb{Z}$, $\mathbb{P}$-a.s.
\end{remark}

We emphasize that in (\ref{eq_Jacobiop_1}) we do not require a {\em{positive}} lower bound for the sequence $(a_n)$. From a dynamical point of view, fixing $z \in \mathbb{C}$, this manifests itself in almost singularity of the 1-step transfer matrices
\begin{equation} \label{eq_onesteptransferm}
B_{n}=\dfrac{1}{a_{n}}\begin{pmatrix} b_{n} - z & -\overline{a_{n-1}} \\
                 a_{n} & 0 \end{pmatrix}  ~\mbox{.}
\end{equation}
Here, the phrase ``almost singular'' refers to the matrix $a_n B_n$ whose determinant may come arbitrarily close to zero. For the ergodic counterpart in (\ref{eq_Jacobiop_2}) we even allow for the matrix $a_n B_n(\omega)$ to be singular, however since $a \in L^1$ this only happens on a set of zero probability. A more detailed discussion of singular Jacobi cocyles may be found in \cite{N}. 

As the main result of this article, we prove the following characterization of the singular(pp+sc)- spectrum for ergodic Jacobi matrices:
\begin{theorem} \label{thm_singspec}
Let $H_{\omega}$ be an ergodic family of Jacobi operators satisfying (\ref{eq_Jacobiop_2}) and (\ref{eq_stationarity}), and let $\ud \mu_{\omega,s}$ be the singular component of some spectral measure for $H_{\omega}$. Then, for $\mathbb{P}$-a.e. $\omega$ and $\omega^{\prime}$, $\ud \mu_{\omega,s}$ and $\ud \mu_{\omega^{'},s}$ are mutually singular on the set of positive Lyapunov exponent, $\{E: L(E) > 0\}$.
\end{theorem}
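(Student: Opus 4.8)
The plan is to exploit the theory of subordinate solutions and boundary values of the $m$-functions (Weyl–Titchmarsh functions) in the region $\{E : L(E) > 0\}$.  The essential point is that on the set of positive Lyapunov exponent, the singular spectrum is carried by energies where a \emph{unique} (up to scaling) decaying solution of the formal eigenvalue equation $H_\omega \psi = E \psi$ exists; this is the content of the Oseledec/Ruelle theorem applied to the transfer matrix cocycle $B_n(\omega)$ in \eqref{eq_onesteptransferm}, together with the Ishii–Pastur–Kotani circle of ideas.  Concretely, I would first recall that by the subordinacy theory of Gilbert–Pearson (in the Jacobi setting), the minimal support of $\ud\mu_{\omega,s}$ is contained in the set of $E$ for which a subordinate solution exists, and that at a.e.\ such $E$ relative to $\ud\mu_{\omega,s}$ the boundary value $m_\omega(E + i0)$ exists and is real or infinite.

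Next I would set up the key rigidity coming from ergodicity.  Because $L(E) > 0$, for a.e.\ $\omega$ the Lyapunov exponent governs the exponential growth/decay rates of solutions, and the decaying solution $u_\omega^{(+)}$ at $+\infty$ and $u_\omega^{(-)}$ at $-\infty$ are each unique up to a constant for spectrally a.e.\ $E$.  A spectral measure point $E$ in the singular part requires a genuinely $\ell^2$, i.e.\ globally decaying, solution, which forces $u_\omega^{(+)}$ and $u_\omega^{(-)}$ to be proportional — this is precisely the eigenvalue condition.  The plan is to compare the supports of $\ud\mu_{\omega,s}$ and $\ud\mu_{\omega',s}$ through the cocycle relation: the transfer matrices $A_n^{\omega'} $ over $\omega'$ are related to those over $\omega$ by the shift $T$, so $\omega' = T^k\omega$ would give an obvious conjugacy, but for \emph{independent} $\omega,\omega'$ the decaying solutions at the \emph{same} energy $E$ live in different half-line problems that share no dynamical relation.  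I would show that the event ``$E$ is a shared eigenvalue (or shared point of the minimal singular support) for both $\omega$ and $\omega'$'' has, for fixed $E$, probability zero in the product space, and then integrate.

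The mechanism I expect to carry the argument is a Fubini/Kotani-type decoupling.  Define on the product space $\Omega \times \Omega$ the set $S = \{(\omega,\omega') : \ud\mu_{\omega,s} \text{ and } \ud\mu_{\omega',s} \text{ are not mutually singular on } \{L>0\}\}$.  Non-mutual-singularity means the two singular measures share a common support set of positive measure; by the subordinacy characterization this forces, for a positive-measure set of $E$, that \emph{both} half-line problems at $\omega$ and $\omega'$ possess subordinate (decaying) solutions at $E$.  For fixed $E \in \{L > 0\}$, the map $\omega \mapsto \mathbf{1}[\,E \text{ is a generalized eigenvalue for } H_\omega\,]$ is measurable and, by ergodicity plus the almost-sure uniqueness of the Oseledec decaying direction, the set of $\omega$ for which $E$ is a genuine eigenvalue has $\mathbb{P}$-measure zero (the point spectrum of an ergodic family is almost surely empty at any fixed energy).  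Applying Fubini on $\Omega\times\Omega$ against the spectral measures and using $L(E)>0$ to guarantee exponential decay forces $\mathbb{P}\times\mathbb{P}(S)=0$.

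\medskip

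\noindent\textbf{Main obstacle.}  The hard part will be making the ``shared singular support'' condition quantitatively equivalent to a pointwise condition on decaying solutions that survives the Fubini argument, because singular measures are supported on Lebesgue-null sets and one cannot directly integrate $\mathbf{1}[E \text{ eigenvalue}]$ against Lebesgue measure.  One must instead integrate against the singular spectral measures themselves and argue that the coincidence set, viewed fiberwise, is $\ud\mu_{\omega,s}$-null for $\mathbb{P}$-a.e.\ $\omega'$; controlling the measurability and the a.s.\ triviality of the overlap here — rather than the exponential-decay input, which $L(E)>0$ supplies cleanly — is where the real work lies.
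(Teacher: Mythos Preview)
Your overall architecture matches the paper's: reduce to a pointwise statement of the form ``for fixed $E$ with $L(E)>0$, $\mathbb{P}\{\omega: E \text{ lies in a fixed supporting set of } \ud\mu_{\omega,s}\}=0$,'' then Tonelli against $\ud\mu_{\omega'}$ to conclude mutual singularity. The paper does exactly this, with the supporting set taken as $\mathfrak{S}_\omega=\{E:\limsup_{\epsilon\to 0^+}(\im G_\omega(0,0)+\im G_\omega(1,1))=\infty\}$ (de la Vall\'ee--Poussin), and the Fubini step is three lines.

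There is, however, a genuine gap in your reduction. You write that ``a spectral measure point $E$ in the singular part requires a genuinely $\ell^2$, i.e.\ globally decaying, solution.'' That is false for singular \emph{continuous} spectrum: membership in the singular support does not by itself produce an $\ell^2$ eigenfunction. What makes the reduction go through is an additional Kotani-theoretic input that you only gesture at: for fixed $E$ with $L(E)>0$, for $\mathbb{P}$-a.e.\ $\omega$ the boundary values $m_\pm(\omega,E+i0)$ exist, are finite and nonzero (equivalently, there exist half-line $\ell^2$ solutions $u_\pm$ with no zeros). This is the paper's Theorem~\ref{thm_mfunk}, proved via the relation $2L(E)=\mathbb{E}_\omega\log(1+|a_0|^{-2}S_+(\omega,E)^{-1})$ and a weak-limit/Banach--Alaoglu argument. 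Only \emph{after} one knows $m_\pm(E+i0)$ are a.s.\ finite and nonzero does the identity for $G(0,0)^{-1}$, $G(1,1)^{-1}$ in terms of $m_\pm$ force the blow-up condition $E\in\mathfrak{S}_\omega$ to mean that $u_+$ and $u_-$ are proportional, i.e.\ that $E$ is a genuine eigenvalue --- which then has probability zero.

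You have also mislocated the difficulty. The Fubini/measurability step you flag as the ``main obstacle'' is routine once one has a concrete measurable supporting set $\mathfrak{S}_\omega$ and the pointwise statement $\mathbb{P}\{\omega:E\in\mathfrak{S}_\omega\}=0$; one integrates $\mathbb{P}\{\omega:E\in\mathfrak{S}_\omega\}$ against $\ud\mu_{\omega'}$ over $\{L>0\}$ and swaps the order. The real work is precisely the existence and finiteness of $m_\pm(E+i0)$ for a.e.\ $\omega$ at a fixed $E$ with $L(E)>0$ --- the step you called ``clean.''
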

Theorem \ref{thm_singspec} generalizes a result in \cite{F} stated for the Schr\"odinger case. We note that this theorem will be proven using Kotani theory \cite{P,I}, which usually gives results relating to the absolutely continuous component of the spectrum. It is also noteworthy that mutual singularity of the singular component of spectral measures is reminiscent of an analogous property exhibited by rank 1 perturbations of bounded s.a. operators (see e.g. \cite{Q}).

We organize the paper as follows. Sec. \ref{sec_kotani} is devoted to the proof of Theorem \ref{thm_singspec}. As a crucial first step, we prove that energies with positive Lyapunov exponent (LE) allow for two solutions $u_{+}, u_{-}$ of $H_\omega \psi = E \psi$ which are $\mathbb{P}$-a.s. non-zero and $\mathit{l}^2$ (and hence necessarily exponentially decaying) at respectively $+\infty$ and $-\infty$ (Theorem \ref{thm_mfunk}). 

Consequently we obtain that given an energy $E$ with positive LE, the probability for $E$ to be in ``the natural'' supporting set (defined in (\ref{eq_singsuppset})) of the singular component of {\em{any}} spectral measure is zero (Theorem \ref{thm_singspec1}). This result is known for Schr\"odinger operators \cite{I}. 

While following the basic strategy laid out in \cite{I}, our proof even when applied to the Schr\"odinger case differs in a number of technical details. In particular, Theorem \ref{thm_mfunk} strengthens the respective statement for Schr\"odinger operators given in Theorem 6.5 of \cite{I}, by removing a set of zero probability, thus simplifying subsequent arguments (for details see remark  \ref{rem_strengthen}).  

Finally, Theorem \ref{thm_singspec} follows as a corollary of Theorem \ref{thm_singspec1}.

Sec. \ref{sec_dualityconj} is devoted to an application of Theorem \ref{thm_singspec} which really served as motivation for the results of Sec. \ref{sec_kotani}. Ergodic Jacobi operators with singular transfer matrices arise naturally in a tight binding description of solids subject to an external magnetic field. 

A prominent example is extended Harper's model \cite{D, E, G, H}, associated with the following almost-periodic operator $H_{\overline{\lambda},\theta;\alpha}$ defined on $l^2(\mathbb{Z})$:
\begin{eqnarray} \label{eq_hamiltonian}
& (H_{\overline{\lambda},\theta;\alpha} \psi)_k := v(\theta + \alpha k) \psi_{k} + c(\theta + \alpha k) \psi_{k+1} + \overline{c}(\theta + \alpha (k-1)) \psi_{k-1} ~\mbox{,}
\end{eqnarray}
where $\overline{\lambda}=(\lambda_{1},\lambda_{2},\lambda_{3})$ and
\begin{eqnarray} \label{eq_hamiltonian1}
& c_{\overline{\lambda}}(x) := \lambda_{3} \mathrm{e}^{-2\pi i (x+\frac{\alpha}{2})} + \lambda_{2} + \lambda_{1} \mathrm{e}^{2 \pi i (x+\frac{\alpha}{2})} ~\mbox{,}
~ v(x)  := 2 \cos(2 \pi x) ~\mbox{,}
\end{eqnarray}
are analytic functions on $\mathbb{R}/\mathbb{Z}$.

The operator in (\ref{eq_hamiltonian})  describes the influence of a transversal magnetic field to the motion of an electron in a 2-dimensional crystal layer. Here, $\alpha$ is a fixed irrational number associated with the magnetic flux through the unit cell and $\theta \in [0,1)$ is the (random) quasi-momentum usually referred to as ``random phase''. 

The parameter triple $(\lambda_{1},\lambda_{2},\lambda_{3})$ ({\em{coupling}}) models the lattice geometry as well as interactions between the lattice sites allowing for both nearest ($\lambda_2$) and next-nearest neighbor ($\lambda_1$ and $\lambda_3$) interactions. Without loss of generality one may assume  $0\leq \lambda_{2} ~\mbox{,} ~0 \leq \lambda_{1} + \lambda_{3}$ and at least one of $\lambda_{1} \mbox{,} ~\lambda_{2} \mbox{,} ~\lambda_{3}$ to be positive \cite{E}. We note that the quasi-periodic operator in (\ref{eq_hamiltonian}) specializes to the famous almost Mathieu operator (in physics literature also known as {\em{Harper's model}}) upon setting $\lambda_1=\lambda_3=0$. 

Even though some attempts, both rigorous \cite{D,E} and heuristic \cite{G, H}, have been made, so far little is known about extended Harper's model. As an important ingredient for the spectral analysis it is desirable to obtain the Lyapunov exponent (on the spectrum) as a function of the coupling constants. 

Theorem \ref{thm_singspec} enables us to prove a formula of the Lyapunov exponent in the so called ``dual regions'' of parameter space (Theorem \ref{thm_duality}). A precise meaning of the word ``duality'' is given at the beginning of Sec. \ref{sec_dualityconj}. We mention that Theorem \ref{thm_duality} provides a rigorous proof of results obtained in physics literature by Thouless \cite{G}. 

Finally, we mention that the missing link to a complete understanding of the Lyapunov exponent, the so called ``self dual'' region of coupling constants, a priori cannot be approached via duality. Computing the Lyapunov exponent in the self dual region therefore constitutes an important open problem which so far has escaped treatment even on a heuristic level in physics literature \cite{G}. The development of these a priori non-duality based methods will be the subject of future work.


{\bf Acknowledgement.} I would like to thank Svetlana Jitomirskaya for valuable discussions during the preparation of this manuscript. I would also like to thank the anonymous referee for insightful remarks leading to improvement of our presentation.

\section{Proof of the Main Theorem} \label{sec_kotani}

We start with the following basic result valid for {\em{any}} ergodic family of bounded self adjoint operators:
\begin{theorem} \label{thm_ergodicop}
For fixed $d \in \mathbb{N}$, let $H_{\omega}$ be any ergodic family of bounded self adjoint operators on $l^{2}(\mathbb{Z}^{d})$. Assume that for $\mathbb{P}$-a.e. $\omega$, the eigenvalues
of $H_{\omega}$ have finite multiplicity, whenever $H_{\omega}$ has non-empty point spectrum. Then, fixing $E \in \mathbb{R}$, $\mathbb{P}\{\omega: ~\mbox{E is eigenvalue for} ~H_{\omega}\} = 0$.
\end{theorem}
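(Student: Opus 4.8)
The plan is to fix $E \in \mathbb{R}$ and study the orthogonal projection $P_\omega := P_\omega(\{E\})$ onto the eigenspace $\ker(H_\omega - E)$, exploiting covariance of the family under the lattice shifts. Let $U_a$, $a \in \mathbb{Z}^d$, denote the unitary shifts on $l^2(\mathbb{Z}^d)$ implementing ergodicity, normalized so that $(U_a \psi)_n = \psi_{n-a}$, whence $H_{T^a \omega} = U_a H_\omega U_a^*$ and consequently $P_{T^a \omega} = U_a P_\omega U_a^*$. Set $f(\omega) := \langle \delta_0, P_\omega \delta_0 \rangle$, a measurable function with $0 \le f \le 1$. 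Since $P_\omega = 0$ precisely when $E$ is not an eigenvalue of $H_\omega$, and a bounded nonnegative operator vanishes as soon as all its diagonal matrix elements do (because $\langle \delta_n, P_\omega \delta_n\rangle = \norm{P_\omega^{1/2}\delta_n}^2$), the entire theorem reduces to showing $f = 0$ $\mathbb{P}$-a.s.

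First I would record the trace identity. Covariance gives $f(T^a \omega) = \langle \delta_0, U_a P_\omega U_a^* \delta_0\rangle = \langle \delta_{-a}, P_\omega \delta_{-a}\rangle$, so summing over the lattice yields $g(\omega) := \sum_{a \in \mathbb{Z}^d} f(T^a \omega) = \sum_{a} \langle \delta_{a}, P_\omega \delta_{a}\rangle = \operatorname{tr} P_\omega$, which is exactly the multiplicity of $E$ as an eigenvalue of $H_\omega$. The function $g$ is manifestly $T$-invariant, since the trace is unchanged under the unitary conjugation $P_{T\omega} = U_1 P_\omega U_1^*$; hence by ergodicity $g \equiv N$ for some constant $N \in \{0,1,2,\dots\}\cup\{\infty\}$ $\mathbb{P}$-a.s. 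The finite-multiplicity hypothesis forces $N < \infty$.

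The conclusion then follows from a single expectation computation. Using $T$-invariance of $\mathbb{P}$ and monotone convergence,
\begin{equation*}
N = \mathbb{E}_\omega[g] = \sum_{a \in \mathbb{Z}^d} \mathbb{E}_\omega[f \circ T^a] = \sum_{a \in \mathbb{Z}^d} \mathbb{E}_\omega[f] ~\mbox{.}
\end{equation*}
Because the index set $\mathbb{Z}^d$ is infinite, finiteness of $N$ is compatible with this identity only if $\mathbb{E}_\omega[f] = 0$. As $f \ge 0$ this gives $f = 0$ $\mathbb{P}$-a.s.; applying the covariance relation $f(T^a\omega) = \langle \delta_{-a}, P_\omega \delta_{-a}\rangle$ for each of the countably many $a \in \mathbb{Z}^d$ and intersecting the corresponding full-measure sets, all diagonal matrix elements of $P_\omega$ vanish simultaneously $\mathbb{P}$-a.s. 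Positivity of $P_\omega$ then yields $P_\omega = 0$, i.e. $E$ is $\mathbb{P}$-a.s. not an eigenvalue of $H_\omega$.

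The main obstacle is not the algebra above but the measurability bookkeeping needed to make it rigorous: one must verify that $\omega \mapsto P_\omega$ is weakly measurable, so that $f$ and $g$ are genuine measurable functions and the interchange of summation and expectation is justified. This is standard and can be obtained from measurability of $\omega \mapsto H_\omega$ together with Stone's formula expressing $P_\omega(\{E\})$ as a strong limit of time averages $\tfrac{1}{2S}\int_{-S}^{S} e^{it(H_\omega - E)}\,\ud t$ (or of resolvents); I would isolate this as a preliminary measurability statement and then run the invariance argument above.
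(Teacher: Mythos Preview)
Your argument is correct and is essentially the same approach as the paper's: the paper applies with $B=\{E\}$ the standard fact that $\dim\mathrm{Ran}\,P_B(H_\omega)\in\{0,\infty\}$ $\mathbb{P}$-a.s.\ (citing \cite{C,M}), while you unpack the proof of that very dichotomy via the trace identity $\operatorname{tr}P_\omega=\sum_a f(T^a\omega)$ and the resulting equation $N=\sum_{a\in\mathbb{Z}^d}\mathbb{E}[f]$. The only difference is packaging---you supply the details in-line rather than quoting the reference---and your measurability remark correctly identifies the one point that needs checking.
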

\begin{proof}
Let $B \in \mathbb{R}$ be any fixed Borel set and denote by $P_{B}(H_{\omega})$ the spectral projection onto $B$. 
The claim follows since by ergodicity $\dim \mathrm{Ran}{P_{B}(H_{\omega})}$ is constant $\mathbb{P}$-a.s with value either $\infty$ or $0$ \cite{C,M}.
\end{proof}

In order to relate the spectral properties of  the family $H_{\omega}$ to its dynamics given in terms of the Lyapunov exponent, we shall consider solutions $u(z)$ in $\mathbb{C}^{\mathbb{Z}}$ of the second order difference equation $H u = z u$, where $z \in \mathbb{C}$ is fixed. 
For two such solutions $u$ and $v$ (same $z$), define the Wronskian
\begin{equation}
W_{n}[u,v]:=u_{n} v_{n+1} - u_{n+1} v_{n} ~\mbox{,} n \in \mathbb{Z} ~\mbox{.} 
\end{equation}
It is easy to see that the Wronskian of two solutions satisfies the following conservation law
\begin{equation} \label{eq_conswron}
\overline{a_{n-1}} W_{n-1}[u,v] = a_{n} W_{n}[u,v] ~\mbox{,} n \in \mathbb{Z} ~\mbox{.} 
\end{equation}
As an immediate consequence of (\ref{eq_conswron}) we obtain
\begin{prop} \label{prop_pointsp}
Any eigenvalue of a Jacobi operator has multiplicity one.
\end{prop}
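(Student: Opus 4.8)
The plan is to show directly that, for a real eigenvalue $E$, the space of $l^2$ solutions of $Hu = Eu$ is at most one-dimensional, by combining the Wronskian conservation law (\ref{eq_conswron}) with the decay forced by membership in $l^2$.

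First I would record the structural fact underlying everything: because $\abs{a_n} > 0$ for every $n$, the equation $Hu = Eu$ is a genuine second order linear difference equation solvable in both directions, so its solution space in $\mathbb{C}^{\mathbb{Z}}$ has dimension exactly two, and a solution is uniquely determined by the pair of ``initial data'' $(u_n, u_{n+1})$ at any single site. In particular, a nonzero solution cannot vanish at two consecutive sites: if $u_n = u_{n+1} = 0$, then the recurrence together with $a_{n-1} \neq 0$ forces $u_{n-1} = 0$, and inductively $u \equiv 0$.

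Next, suppose $u$ and $v$ are two $l^2$ eigenfunctions for $E$. Taking moduli in (\ref{eq_conswron}) gives $\abs{a_n}\,\abs{W_n[u,v]} = \abs{a_{n-1}}\,\abs{W_{n-1}[u,v]}$, so the nonnegative sequence $n \mapsto \abs{a_n W_n[u,v]}$ is constant, equal to some $M \geq 0$. Since $(a_n) \in l^\infty$ and $u,v \in l^2$ force $u_n, u_{n+1}, v_n, v_{n+1} \to 0$ as $n \to \infty$, we get $a_n W_n[u,v] = a_n(u_n v_{n+1} - u_{n+1} v_n) \to 0$; hence $M = 0$, and using $\abs{a_n} > 0$ once more, $W_n[u,v] = 0$ for all $n$.

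Finally I would convert the identically vanishing Wronskian into linear dependence. At any site $n$, the relation $W_n[u,v] = 0$ says the vectors $(u_n, u_{n+1})$ and $(v_n, v_{n+1})$ are parallel in $\mathbb{C}^2$; choosing $n$ with $(u_n, u_{n+1}) \neq (0,0)$ (possible unless $u \equiv 0$, in which case there is nothing to prove) yields a scalar $\lambda$ with $(v_n, v_{n+1}) = \lambda (u_n, u_{n+1})$, and uniqueness of a solution from its data at one site gives $v = \lambda u$. Thus the eigenspace is one-dimensional. The only point requiring any care — the ``main obstacle'', such as it is — is the repeated reliance on $\abs{a_n} > 0$: it is what guarantees a genuinely two-dimensional solution space (so that data at a single site pins down a solution), what lets us pass from $\abs{a_n W_n} = 0$ to $W_n = 0$, and what ultimately turns the vanishing Wronskian into proportionality of $u$ and $v$.
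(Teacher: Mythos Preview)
Your proof is correct and follows exactly the approach the paper has in mind: the paper states the proposition as ``an immediate consequence of (\ref{eq_conswron})'' and notes that it ``uses $a_n \neq 0, \forall n \in \mathbb{Z}$'', and you have simply written out the standard argument behind that remark in full detail.
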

We note that Proposition \ref{prop_pointsp} uses $a_n \neq 0, \forall n \in \mathbb{Z}$. In particular, in the ergodic setup, remark \ref{rem_ergodic} implies that Proposition \ref{prop_pointsp} holds $\mathbb{P}$-a.s. Simplicity of the point spectrum of Jacobi operators shall be crucial for Sec. \ref{sec_dualityconj}. 

Sometimes it is useful to relate the solutions of the Schr\"odinger equation for the double-sided and the respective one-sided 
operator $H_{+}:=P_{+} H P_{+}$, where $P_{+}$ is a projection onto the positive half line. 

For $z \in \mathbb{C}$ fixed,
a solution $\psi$ to $H \psi = z \psi$ is a solution of the respective (positive) half-line equation if and only if it satisfies the boundary condition $\psi_0 = 0$. In particular this
implies that Proposition \ref{prop_pointsp} also applies for eigenvalues of $H^{+}$. 

More generally, for any $n \in \mathbb{Z}$ one can consider the positive $n$-one-sided operator $H_{+}^{n}:=P^{+n}HP^{+n}$ with $P^{+n}$ the projection onto $\{m > n\}$. For convenience, we allow $n=-\infty$ in which case we identify $H_{+}^{n}$ with the double-sided operator $H$. Similarly, one defines the negative $n$-one-sided operator, $H_{-}^{n}:= P^{-n} H P^{-n}$, $n \in \mathbb{Z}$, where $P^{-n}$ is the projection onto $\{m < n\}$. Again, we adopt the convention $H = H_{-}^{n}$ for $n=+\infty$. To simplify notation we write $H_{\pm} :=H_{\pm}^{0}$.

For the ergodic setup, Proposition \ref{prop_pointsp} and Theorem \ref{thm_ergodicop} imply the following statement which will be key for the proof of 
Theorem \ref{thm_singspec}:
\begin{prop} \label{prop_eigenvaluesofjacobi}
Let $H_{\omega}$ be a family of ergodic Jacobi operators, and let $E \in \mathbb{R}$ be fixed. Then the probability for $E$ to be an eigenvalue of some (positive or negative) 
$n$-one-sided operator (including $n=\pm \infty$) is zero.
\end{prop}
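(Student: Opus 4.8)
The plan is to peel off the dependence on $n$ first and then treat the two-sided operator and the genuine half-line operators by completely different means. By the stationarity (\ref{eq_stationarity}) the index shift $(S\psi)_k:=\psi_{k+n}$ conjugates $H_+^n(\omega)$ on $l^2(\{m>n\})$ onto $H_+^0(T^n\omega)$ on $l^2(\{m>0\})$, and similarly on the negative side, while $n=\pm\infty$ recovers $H_\omega$ itself. Since $\mathbb{P}$ is $T$-invariant, $\mathbb{P}(E\in\sigma_{pp}(H_\pm^n(\omega)))$ is independent of $n$ and equals $\mathbb{P}(E\in\sigma_{pp}(H_\pm^0(\omega)))$. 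The event that $E$ is an eigenvalue of \emph{some} one-sided operator is a countable union over $n\in\mathbb{Z}\cup\{\pm\infty\}$ and over the two signs, so by subadditivity it suffices to show that each of $\mathbb{P}(E\in\sigma_{pp}(H_\omega))$ and $\mathbb{P}(E\in\sigma_{pp}(H_\pm^0(\omega)))$ vanishes.

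The two-sided case is immediate: $(H_\omega)$ is ergodic and, by Proposition \ref{prop_pointsp}, each eigenvalue is simple and hence of finite multiplicity, so Theorem \ref{thm_ergodicop} with $d=1$ gives $\mathbb{P}(E\in\sigma_{pp}(H_\omega))=0$ and settles $n=\pm\infty$.

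The half-line operators are the real issue, since $(H_+^0(\omega))_\omega$ is \emph{not} ergodic---truncation pins a boundary at the origin and the eigenfunction concentrates there instead of spreading through infinite volume---so Theorem \ref{thm_ergodicop} does not apply. I would recast the event in terms of solutions: a bounded half-line Jacobi matrix is in the limit point case at $+\infty$, so for fixed $E$ there is at most one solution $u_+(E,\omega)$ of $H\psi=E\psi$ (up to a scalar) that is $l^2$ at $+\infty$, and $E\in\sigma_{pp}(H_+^0(\omega))$ holds exactly when $u_+$ exists and satisfies the Dirichlet node $u_{+,0}=0$. When $L(E)=0$ there is $\mathbb{P}$-a.s.\ no square-summable solution at $+\infty$, so the event is null; when $L(E)>0$ the decaying Oseledets solution $u_+$ exists a.s., and $u_{+,0}=0$ says that its direction at the origin is the single \emph{vertical} one.

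The crux is then to prove that this direction a.s.\ misses one prescribed value, i.e.\ that the invariant measure of the projectivized cocycle puts no mass on the vertical direction. I would drive this with coefficient stripping: the continued-fraction relation between the $m$-functions of $H_+^0(\omega)$ and $H_+^0(T\omega)$ forbids both from having a pole at the same real $E$, so, writing $A^+:=\{E\in\sigma_{pp}(H_+^0(\omega))\}$, one has $A^+\cap T^{-1}A^+=\emptyset$; equivalently the transfer matrices carry the vertical direction to the horizontal one, so any vertical atom is pushed off itself and then smeared by the $\omega$-dependence of the cocycle at the next step. The two-sided bound feeds in as well: if $u_{+,0}=0$ and $u_{-,0}=0$ both held, then $W_0[u_+,u_-]=0$, whence by the conservation law (\ref{eq_conswron}) $u_+\propto u_-$ and $E$ would be a genuine $l^2(\mathbb{Z})$ eigenvalue, an event already shown to be null. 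Turning ``the vertical atom cannot persist under the dynamics'' into ``its mass is exactly zero''---excluding any positive-mass atom of the invariant measure at the vertical direction---is the step I expect to be the main obstacle, and it is precisely where genuine ergodicity, rather than abstract self-adjointness, must enter.
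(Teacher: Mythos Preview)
The paper offers no detailed proof here; it simply asserts that the statement follows from Proposition~\ref{prop_pointsp} (simplicity of eigenvalues) together with Theorem~\ref{thm_ergodicop} (the $0$/$\infty$ dichotomy for ergodic families on $l^2(\mathbb{Z}^d)$). Your reduction to $n=0$ via shift-covariance and countable subadditivity, and your treatment of the two-sided case $n=\pm\infty$, are exactly what the paper has in mind and are correct.

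Your suspicion about the half-line case is not only justified---it points to a genuine gap in the paper. Theorem~\ref{thm_ergodicop} needs a covariant unitary representation with no finite-dimensional invariant subspace, and the half-line operators $H_+^0(\omega)$ simply do not carry one; the shift does not conjugate $H_+^0(\omega)$ to $H_+^0(T\omega)$ but to $H_+^1(\omega)$. More to the point, the ``main obstacle'' you flag---upgrading $A^+\cap T^{-1}A^+=\emptyset$ to $\mathbb{P}(A^+)=0$---is insurmountable in the stated generality, because the proposition is \emph{false} for periodic systems. Take $\Omega=\mathbb{Z}/3\mathbb{Z}$ with the shift, $a\equiv 1$, and $b=(0,0,10)$. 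One computes that the Dirichlet solution from site~$0$ (for the phase $\omega=0$) is an eigenvector of the monodromy matrix at $E_-=5-\sqrt{26}$, with Floquet multiplier $-E_-\in(0,1)$, so $E_-$ lies in an open gap and is a genuine $l^2$ eigenvalue of $H_+^0(\omega=0)$. Hence $\mathbb{P}\{\omega:E_-\text{ is an eigenvalue of some }H_+^n(\omega)\}=1$. Your disjointness observation $A^+\cap T^{-1}A^+=\emptyset$ survives here (and indeed $\mathbb{P}(A^+)=1/3\le 1/2$), but no ergodic argument can push it to zero.

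So your instinct was right and your honest flag was the correct call: the half-line statement requires an extra hypothesis---aperiodicity of $T$, or non-atomicity of $\mathbb{P}$, or a Furstenberg-type condition guaranteeing the stationary measure on $\mathbb{P}^1$ is atomless---none of which is assumed in the paper. For the paper's intended application (irrational rotation on $\mathbb{T}$) such hypotheses hold and the conclusion is recoverable, but not via the one-line deduction the paper offers.
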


For $z \in \mathbb{H}^{+}$, let $\psi_{+}(z)$ be a solution to the equation $H \psi = z \psi$ which is $l^{2}$ at $+\infty$. Self adjointness of the half-line operator implies
existence of  $\psi_{+}(z)$ as well as $\psi_{+,n}(z) \neq 0, ~\forall n$. Using (\ref{eq_conswron}), we conclude uniqueness of $\psi_{+}(z)$ up to a multiplicative constant. Similarly, we can find a solution
$\psi_{-}(z)$ which is $l^{2}$ at $-\infty$. 

It is well known that the Green's function of $H$ can be expressed in terms of $\psi_{\pm}(z)$ by
\begin{equation} \label{eq_greens}
G(n,m;z) := \left \langle \delta_{n}, G(z) \delta_{m} \right \rangle = \dfrac{\psi_{-,\min\{n,m\}}(z) \psi_{+,\max\{n,m\}}(z)}{a_{m} W_{m}[\psi_{-}(z),\psi_{+}(z)]} ~\mbox{,}
\end{equation}
for $z \in \mathbb{C} \setminus \mathbb{R}$. Here, $\{\delta_{n}, n \in \mathbb{Z}\}$ denotes the standard basis in $l^{2}(\mathbb{Z})$.

Finally, for $n\in \mathbb{Z}$ we define the $m$-functions,
\begin{equation}
m_{+,n}(z):=-\dfrac{\psi_{+,n+1}(z)}{\overline{a_{n}} \psi_{+,n}(z)} ~\mbox{,} ~m_{-,n}(z):=-\dfrac{\psi_{-,n-1}(z)}{a_{n-1} \psi_{-,n}(z)} ~\mbox{,}
\end{equation}
where we set $m_{\pm}(z):=m_{\pm,0}(z)$. It is clear that in the ergodic case we have $m_{\pm,n}(\omega) = m_{\pm}(T^{n}\omega)$, $n\in \mathbb{Z}$. 
For simplicity of notation, we will suppress the $z$-dependence of the $m$-functions whenever the context permits.

Since $\psi_{\pm}$ are solutions to the Schr\"odinger equation, the $m$-functions satisfy the following Riccati-type equations
\begin{eqnarray} \label{eq_mfunct}
-\dfrac{1}{m_{+,n-1}} + (b_{n} - z) - \abs{a_{n}}^{2} m_{+,n} & = & 0  \nonumber \\
-\abs{a_{n-1}}^{2} m_{-,n} + (b_{n}-z) - \dfrac{1}{m_{-,n+1}} & = & 0 ~\mbox{.}
\end{eqnarray}

We mention that using a relation analogous to (\ref{eq_greens}) (see e.g. \cite{K, O}), the functions $m_{\pm}(z)$ relate to Borel transforms of the spectral measures $\ud \mu_{\pm}$ associated respectively with $H_{\pm}$ and $\delta_{\pm 1}$, i.e.
\begin{equation} \label{eq_mfuncBorel}
m_{\pm}(z) = \int \dfrac{\ud \mu_{\pm}(x)}{x-z} ~\mbox{.}
\end{equation}
This in particular implies that $m_{\pm}$ are Herglotz functions, and we may use the theory of Borel transforms of measures to consider limits as $\im(z) \to 0^{+}$.

From here on we specifically consider the ergodic set-up. The following statement relates the Lyapunov exponent and the $m$-functions. It extends a result known for
the Schr\"odinger case \cite{L}. 
\begin{prop} \label{prop_lyap}
For $\im{z} > 0$ we have
\begin{equation} \label{eq_lyap}
2 L(z) = \mathbb{E}_{\omega} \log \left\{1 + \dfrac{\im{z}}{\abs{a_{0}(\omega)}^{2} \im{m_{+}(\omega,z)}}\right\} ~\mbox{.}
\end{equation}
\end{prop}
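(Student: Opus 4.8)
The plan is to compute the exponential decay rate at $+\infty$ of the distinguished $l^{2}$ solution $\psi_{+}$ in two different ways. On the dynamical side this rate is governed by the Lyapunov exponent; on the spectral side it is encoded in the $m$-functions through the Riccati recursion (\ref{eq_mfunct}); and Birkhoff's ergodic theorem links the two. Throughout we fix $z$ with $\im z>0$, so that $\psi_{+,n}\neq 0$ for all $n$ (self-adjointness of $H_{+}$) and $\im m_{+,n}>0$ for all $n$ (the Herglotz property underlying (\ref{eq_mfuncBorel})).

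First I would turn the one-step ratios of $\psi_{+}$ into quantities involving only imaginary parts of $m$-functions. From the definition of $m_{+,n}$ one reads off $\psi_{+,n}/\psi_{+,n-1}=-\overline{a_{n-1}}\,m_{+,n-1}$, hence $\abs{\psi_{+,n}/\psi_{+,n-1}}^{2}=\abs{a_{n-1}}^{2}\abs{m_{+,n-1}}^{2}$. Taking the imaginary part of the first equation in (\ref{eq_mfunct}) and using $b_{n}\in\mathbb{R}$ together with $\im(1/m_{+,n-1})=-\im m_{+,n-1}/\abs{m_{+,n-1}}^{2}$ yields
\[
\frac{\im m_{+,n-1}}{\abs{m_{+,n-1}}^{2}}=\im z+\abs{a_{n}}^{2}\im m_{+,n}.
\]
Setting $\rho_{n}:=\abs{a_{n}}^{2}\im m_{+,n}>0$ and combining the two relations gives the key identity $\abs{\psi_{+,n}/\psi_{+,n-1}}^{2}=\rho_{n-1}/(\im z+\rho_{n})$.

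Taking logarithms, summing over $1\le n\le N$ and telescoping the $\log\rho_{n}$, one obtains
\[
2\log\abs{\frac{\psi_{+,N}}{\psi_{+,0}}}=\log\rho_{0}-\log(\im z+\rho_{N})+\sum_{n=1}^{N-1}\log\frac{\rho_{n}}{\im z+\rho_{n}}.
\]
I would now divide by $N$ and let $N\to\infty$. The sequence $\rho_{n}(\omega)=\rho_{0}(T^{n}\omega)$ is ergodic and, crucially, uniformly bounded: by (\ref{eq_mfuncBorel}) one has $\im m_{+}\le\mu_{+}(\mathbb{R})/\im z$, and $a\in l^{\infty}$, so the boundary term $N^{-1}\log(\im z+\rho_{N})$ tends to $0$. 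Since $0<\rho_{n}/(\im z+\rho_{n})<1$ the summand is bounded above by $0$, so Birkhoff's theorem applies and the average $N^{-1}\sum_{n=1}^{N-1}\log\{\rho_{n}/(\im z+\rho_{n})\}$ converges a.s.\ to $\mathbb{E}_{\omega}\log\{\rho_{0}/(\im z+\rho_{0})\}$. For the left-hand side, because $\im z>0$ the Lyapunov exponent is strictly positive and finite, the two exponents are $\pm L(z)$, and as $\psi_{+}$ is the unique contracting direction, Oseledets' theorem gives $N^{-1}\log\abs{\psi_{+,N}}\to -L(z)$. Hence $-2L(z)=\mathbb{E}_{\omega}\log\{\rho_{0}/(\im z+\rho_{0})\}$, and replacing the fraction by its reciprocal inside the logarithm produces exactly (\ref{eq_lyap}).

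The main obstacle is the identification $N^{-1}\log\abs{\psi_{+,N}}\to -L(z)$. One must verify that for $\im z>0$ the top Lyapunov exponent is positive and finite, that the two exponents sum to $0$ even though the transfer matrices $B_{n}$ of (\ref{eq_onesteptransferm}) are not unimodular, and that the $l^{2}$ solution lies in the contracting Oseledets subspace. The summation-to-zero step rests on $N^{-1}\log\abs{a_{N}}\to 0$, a consequence of $\log\abs{a}\in L^{1}$ coming from (\ref{eq_Jacobiop_2}). This is precisely where the argument departs from the Schr\"odinger case $\abs{a_{n}}\equiv 1$ of \cite{L}: the determinant $\abs{\det B_{n}}=\abs{a_{n-1}}/\abs{a_{n}}$ must be shown to contribute nothing to the growth, and the almost-singularity of $a_{n}B_{n}$ stressed in the introduction prevents one from assuming the relevant quantities are bounded away from $0$.
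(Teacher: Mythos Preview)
Your argument is correct and follows essentially the same route as the paper: take the imaginary part of the Riccati recursion to express $\abs{\psi_{+,n}/\psi_{+,n-1}}^{2}$ in terms of $\abs{a_{n}}^{2}\im m_{+,n}$, then combine Birkhoff's ergodic theorem with Oseledets' theorem applied to the contracting $l^{2}$-solution. The only cosmetic difference is that the paper takes the expectation of the one-step identity directly (using $T$-invariance of $\mathbb{P}$ to cancel the shifted terms), whereas you first telescope over $N$ steps and then pass to the limit, which forces you to dispose of the boundary term $N^{-1}\log(\im z+\rho_{N})$; your bound $\im m_{+}\le \mu_{+}(\mathbb{R})/\im z$ handles this cleanly.
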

\begin{remark}
We mention that a formula similar to (\ref{eq_lyap}) is given in \cite{O} (see Eq. (5.73) therein), however with $\abs{a_{0}(\omega)}^{2}$ in (\ref{prop_lyap}) being replaced by $a_{0}(\omega)$. Since a proof is omitted in \cite{O}, we correct the typo in \cite{O} providing the details below.
\end{remark}

\begin{proof}
Taking imaginary parts, the first equation in (\ref{eq_mfunct}) implies
\begin{eqnarray}
& \log\left\{\abs{a_{n}(\omega)}^{2} + \dfrac{\im{z}}{\im{m_{+}(T^{n}\omega)}} \right\} =  \log \im m_{+}(T^{n-1}\omega) & \nonumber \\
& - \log \im m_{+}(T^{n}\omega )  - 2 \log \abs{m_{+}(T^{n-1}\omega)} & \nonumber ~\mbox{.}
\end{eqnarray}
Since $T$ is measure preserving, taking expectations on both sides yields
\begin{eqnarray} \label{eq_1}
\mathbb{E}_{\omega} \log\left\{1 + \dfrac{\im{z}}{\abs{a_{0}(\omega)}^{2}\im{m_{+}(\omega)}} \right\} = - 2 \mathbb{E}_{\omega} 
\log \left \vert \dfrac{\psi_{+,1}(\omega)}{\psi_{+,0}(\omega)}  \right \vert  ~\mbox{.}
\end{eqnarray}

First, notice that by Birkhoff's ergodic theorem,
\begin{equation} \label{eq_2}
\dfrac{1}{n} \log \left \vert \dfrac{\psi_{+,n}(\omega)}{\psi_{+,0}(\omega)}  \right \vert \to 
\mathbb{E}_{\omega} \log \left \vert \dfrac{\psi_{+,1}(\omega)}{\psi_{+,0}(\omega)}  \right \vert ~\mbox{, $\mathbb{P}$-a.e.} ~\mbox{,}
\end{equation}
as $n \to \infty$.

On the other hand $\im{z} > 0$ implies $L(z) > 0$, hence using Oseledets' theorem (in its original GL(2,$\mathbb{C}$) formulation \cite{AA}) we obtain
\begin{equation} \label{eq_3}
\dfrac{1}{n} \log \left \vert \dfrac{\psi_{+,n}(\omega)}{\psi_{+,0}(\omega)}  \right \vert \to - L ~\mbox{, $\mathbb{P}$-a.e.} ~\mbox{,}
\end{equation}
as $n \to \infty$.
Thus, equations (\ref{eq_1}), (\ref{eq_2}), and (\ref{eq_3}) finally yield the claim.
\end{proof}

Using (\ref{eq_mfuncBorel}), the monotone convergence theorem implies the existence of
\begin{equation}
S_{\pm}(\omega,E) := \lim_{\epsilon \to 0^{+}} \dfrac{\im m_{+}(\omega, E + i \epsilon)}{\epsilon} ~\mbox{,}
\end{equation}
for any fixed $E \in \mathbb{R}$. 

Thus using dominated convergence, the formula for the Lyapunov exponent given in Proposition \ref{prop_lyap} persists in the limit, i.e.
\begin{equation} \label{eq_lyap}
2 L(E) = \mathbb{E}_{\omega} \log \left\{1 + \dfrac{1}{\abs{a_{0}(\omega)}^{2}} S_{+}(\omega,E)^{-1} \right\} ~\mbox{,}
\end{equation}
for any fixed $E \in \mathbb{R}$. Here, we also used that for any $E\in\mathbb{R}$, $L(E+i\epsilon)$ is continuous w.r.t. $\epsilon$ which e.g. can be obtained 
from the Thouless formula.

Since (\ref{eq_lyap}) contains the limit of the quantity $\frac{\im m_{+}(\omega,z)}{\im z}$, the following Lemma will prove to be of use:
\begin{lemma} \label{lem_1}
For $\im z > 0$,
\begin{equation}
\dfrac{\im m_{+}(\omega,z)}{\im z} = \dfrac{1}{\abs{a_{0}(\omega)}^{2} \abs{\psi_{+,0}(\omega,z)}^{2}} \norm{P_{+} \psi_{+}(\omega,z)}^{2} ~\mbox{.}
\end{equation}
\end{lemma}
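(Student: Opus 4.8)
The plan is to obtain the identity directly from a discrete Green's identity (summation by parts) for the solution $\psi_{+}(\omega,z)$ truncated to the finite segment $\{1,\dots,N\}$, followed by a limit $N\to\infty$. Abbreviate $u:=\psi_{+}(\omega,z)$, so that $Hu=zu$, and set $c_{n}:=a_{n}\overline{u_{n}}u_{n+1}$. First I would start from $\sum_{n=1}^{N}\overline{u_{n}}(Hu)_{n}=z\sum_{n=1}^{N}\abs{u_{n}}^{2}$ and subtract its complex conjugate. The diagonal terms $b_{n}\abs{u_{n}}^{2}$ are real and cancel, so that $2i\,\im z\sum_{n=1}^{N}\abs{u_{n}}^{2}$ equals precisely the sum of the off-diagonal contributions coming from $a_{n}u_{n+1}$ and $\overline{a_{n-1}}u_{n-1}$.

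The crucial algebraic point is that these off-diagonal terms telescope. Writing the ``incoming'' contributions as $d_{n}:=\overline{a_{n-1}}\,\overline{u_{n}}u_{n-1}$, the Hermitian pairing $\overline{a_{n-1}}$ appearing in (\ref{eq_Jacobiop_1}) gives exactly $d_{n}=\overline{c_{n-1}}$, whence the entire boundary expression collapses to $(c_{N}-\overline{c_{N}})-(c_{0}-\overline{c_{0}})$. Dividing by $2i$ yields $\im z\sum_{n=1}^{N}\abs{u_{n}}^{2}=\im(c_{N})-\im(c_{0})$. This is the step I expect to demand the most care, since it is where the \emph{complex} (rather than real Schr\"odinger) nature of $a_{n}$ must be tracked; it is the self-adjoint structure of $H$ that makes the cancellation exact.

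Next I would let $N\to\infty$. Because $\psi_{+}$ is $\mathit{l}^{2}$ at $+\infty$, the partial sums converge to $\norm{P_{+}\psi_{+}}^{2}=\sum_{n\ge 1}\abs{u_{n}}^{2}$, and since $\abs{a_{N}}\le\norm{a}_{\infty}$ while $u_{N},u_{N+1}\to 0$, the boundary term satisfies $\abs{c_{N}}\to 0$. Hence $\im z\,\norm{P_{+}\psi_{+}}^{2}=-\im(c_{0})=-\im\bigl(a_{0}\overline{u_{0}}u_{1}\bigr)$.

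Finally I would relate $c_{0}$ to $m_{+}$ using the definition $m_{+}=-u_{1}/(\overline{a_{0}}u_{0})$: rationalizing the denominator gives $\im m_{+}=-\im(a_{0}u_{1}\overline{u_{0}})/(\abs{a_{0}}^{2}\abs{u_{0}}^{2})=-\im(c_{0})/(\abs{a_{0}}^{2}\abs{u_{0}}^{2})$, the division being legitimate since $\psi_{+,0}\neq 0$ (established above). Combining the two displays gives $\im m_{+}/\im z=\norm{P_{+}\psi_{+}}^{2}/(\abs{a_{0}}^{2}\abs{\psi_{+,0}}^{2})$, which is the claim; as a sanity check, the right-hand side is manifestly positive, consistent with $m_{+}$ being Herglotz. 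Alternatively, the same identity follows spectrally from $\im m_{+}/\im z=\norm{(H_{+}-z)^{-1}\delta_{1}}^{2}$ via (\ref{eq_mfuncBorel}), but the summation-by-parts route is self-contained and keeps everything expressed in terms of $\psi_{+}$, as the statement requires.
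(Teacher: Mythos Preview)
Your proof is correct. It takes a different route from the paper: the paper follows exactly the spectral alternative you mention at the end, writing $\im m_{+}/\im z=\sum_{n\ge 1}\bigl|\langle\delta_{1},(H_{+}-z)^{-1}\delta_{n}\rangle\bigr|^{2}$ via (\ref{eq_mfuncBorel}) and functional calculus, and then quoting the half-line Green's function formula (from \cite{L}) to identify each matrix element as $\bigl|\psi_{+,n}/(a_{0}\psi_{+,0})\bigr|$.

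Your summation-by-parts argument is more elementary and fully self-contained: it needs no external Green's function formula, and it isolates precisely where the complex off-diagonal $a_{n}$ enters (the identity $d_{n}=\overline{c_{n-1}}$ is what encodes self-adjointness). The paper's route is shorter once one is willing to import the resolvent expression, and it makes the Herglotz structure of $m_{+}$ visible from the start. Both proofs use $(a_{n})\in\ell^{\infty}$---yours to kill the boundary term $c_{N}\to 0$, the paper's to have a bounded $H_{+}$ for functional calculus.
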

\begin{proof}
Using functional calculus, Eq. (\ref{eq_mfuncBorel}) implies
\begin{equation}
\dfrac{\im m_{+}(\omega,z)}{\im{z}} = \sum_{n=1}^{\infty} \abs{\left \langle \delta_{1} , (P_{+} H_{\omega} P_{+} - z)^{-1} \delta_{n} \right \rangle}^{2}
\end{equation}
Finally, making use of expressions for the Green's function of the half line operator (see e.g. \cite{L}), we obtain
\begin{equation}
\abs{\left \langle \delta_{1} , (P_{+} H_{\omega} P_{+} - z)^{-1} \delta_{n} \right \rangle} = \left \vert \dfrac{\psi_{+,n}(\omega,z)}{a_{0}(\omega) \psi_{+,0}(\omega,z)} \right \vert ~\mbox{,}
\end{equation}
for any $n \geq 1$.
\end{proof}

We are now in the position to characterize the set of energies with positive Lyapunov exponent in terms of the $m$-functions. 
\begin{theorem} \label{thm_mfunk}
Let $E \in \mathbb{R}$ s.t. $L(E) > 0$. Then for $\mathbb{P}$-a.e. $\omega$, there exists a solution $u_{+}(\omega,E)$ with $u_{+,n}(\omega,E) \neq 0$ for all $n \in \mathbb{Z}$, which is $\mathit{l}^2$ at $+\infty$. In particular, $m_{+}(\omega,E+i0)$ exists, is finite and non-zero.
\end{theorem}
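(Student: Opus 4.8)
The plan is to reduce everything to the finiteness of the boundary quantity $S_+(\omega,E)=\lim_{\epsilon\to0^+}\epsilon^{-1}\im m_+(\omega,E+i\epsilon)$. By (\ref{eq_mfuncBorel}) the function $m_+$ is the Borel transform of the finite measure $\ud\mu_+^\omega$, so $S_+(\omega,E)=\int(x-E)^{-2}\,\ud\mu_+^\omega(x)\in(0,\infty]$; here positivity is automatic because $\ud\mu_+^\omega$ is a nonzero finite measure, so the entire content of the first step is to prove $S_+(\omega,E)<\infty$ for $\mathbb{P}$-a.e.\ $\omega$. As a seed I would read off the boundary form of Proposition \ref{prop_lyap}: since the integrand $\log\{1+|a_0|^{-2}S_+^{-1}\}$ is nonnegative and is strictly positive exactly when $S_+<\infty$, the hypothesis $L(E)>0$ forces $\mathbb{P}\{S_+(\cdot,E)<\infty\}>0$.

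The heart of the argument is to upgrade this to probability one by ergodicity. For this I would use the coefficient-stripping form of the Riccati equation (\ref{eq_mfunct}) at $n=1$, namely $m_+(\omega,z)=\bigl(b(T\omega)-z-|a(T\omega)|^2\,m_+(T\omega,z)\bigr)^{-1}$, which exhibits $m_+(\omega,\cdot)$ as a M\"obius image of $m_+(T\omega,\cdot)$ with coefficients that are real at $z=E$. Such a map preserves finiteness of the Herglotz boundary derivative $S_+$ away from its single pole, so the event $\{S_+(\cdot,E)<\infty\}$ is $T$-invariant modulo a null set, and ergodicity of $T$ then promotes the positive-probability statement to $\mathbb{P}\{S_+(\cdot,E)<\infty\}=1$. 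I expect this invariance step --- and in particular controlling the exceptional $\omega$ at which the M\"obius denominator degenerates --- to be the main technical obstacle.

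Once $S_+(\omega,E)<\infty$ a.s., the remaining claims follow cleanly. Finiteness of $\int(x-E)^{-2}\,\ud\mu_+^\omega$ is exactly the condition under which the Borel transform has a finite (necessarily real) boundary value, so $m_+(\omega,E+i0):=\lim_{\epsilon\to0^+}m_+(\omega,E+i\epsilon)$ exists and is finite. Normalizing $\psi_{+,0}(\omega,E+i\epsilon)\equiv1$ and using that the one-step transfer matrices (\ref{eq_onesteptransferm}) depend continuously on $z$, the solutions $\psi_{+,n}(\omega,E+i\epsilon)$ converge entrywise as $\epsilon\to0$ to a genuine solution $u_+(\omega,E)$ of $H_\omega u=Eu$ with $u_{+,0}=1$ and $u_{+,1}=-\overline{a_0}\,m_+(\omega,E+i0)$; by Lemma \ref{lem_1} together with Fatou's lemma, $\lVert P_+u_+\rVert^2\le|a_0|^2\,S_+(\omega,E)<\infty$, so $u_+$ is $\ell^2$ at $+\infty$. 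That $u_+$ is moreover exponentially decaying then follows from $L(E)>0$ via Oseledets' theorem (legitimate since $\lvert\mathbb{E}_\omega\log\lvert a\rvert\rvert<\infty$), because any $\ell^2$ solution must lie in the one-dimensional contracting Oseledets subspace.

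Finally I would rule out vanishing and obtain $m_+\neq0$. If $u_{+,n_0}(\omega,E)=0$ for some $n_0\in\mathbb{Z}$, then the restriction of $u_+$ to $\{m>n_0\}$ is a nonzero $\ell^2$ eigenfunction of the one-sided operator $H_+^{n_0}$ at energy $E$; by Proposition \ref{prop_eigenvaluesofjacobi} this has probability zero for each fixed $n_0$, and the countable union over $n_0\in\mathbb{Z}$ shows that $\mathbb{P}$-a.s.\ $u_{+,n}(\omega,E)\neq0$ for all $n$. Then $m_+(\omega,E+i0)=-u_{+,1}/(\overline{a_0}\,u_{+,0})$ is finite because $u_{+,0}\neq0$ and nonzero because $u_{+,1}\neq0$, which completes the proof.
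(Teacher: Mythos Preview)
Your proposal is correct and follows essentially the same strategy as the paper: use the boundary form (\ref{eq_lyap}) to get $\mathbb{P}\{S_+(\cdot,E)<\infty\}>0$, upgrade to full measure by ergodicity, pass to a limiting $\ell^2$ solution, and rule out zeros via Proposition~\ref{prop_eigenvaluesofjacobi}.

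The one tactical difference worth noting is the ergodicity upgrade. You argue that $\{S_+(\cdot,E)<\infty\}$ is itself $T$-invariant modulo a null set by analyzing how the Riccati/M\"obius relation (\ref{eq_mfunct}) transports the boundary derivative; the exceptional $\omega$ you flag are indeed harmless, since a degenerate M\"obius denominator forces $E$ to be an eigenvalue of a half-line operator and Proposition~\ref{prop_eigenvaluesofjacobi} disposes of that. The paper instead bypasses this analysis by passing to the manifestly $T$-invariant set $\Omega_0=\{\omega:S_+(T^n\omega,E)<\infty\text{ for some }n\in\mathbb{Z}\}$, picking whichever $n$ works, and building $u_+$ normalized at index $n$ via Banach--Alaoglu; only after the nonvanishing conclusion does it return to $m_+(\omega,E+i0)$. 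Your route yields $S_+(\omega,E)<\infty$ a.s.\ directly, so the limit can be taken at $n=0$ and the existence of $m_+(\omega,E+i0)$ is immediate from standard Borel-transform theory; the paper's route avoids the M\"obius bookkeeping but carries the auxiliary shift through the construction. The remaining steps---your pointwise convergence plus Fatou versus the paper's weak limits plus Banach--Alaoglu, and the identical nonvanishing argument---are equivalent variants.
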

\begin{remark} \label{rem_strengthen}
In \cite{I}, the authors proof a similar statement for Schr\"odinger operators, which however does not rule out the case where the limiting solution $u_+(\omega,E)$ has zeros (Theorem 6.5 (b) \cite{I}). We mention that even though the authors of \cite{I} suspected this to only happen with zero probability, no proof was provided then.
\end{remark}

\begin{proof}
Using (\ref{eq_lyap}), $L(E) > 0$ implies that $S_{+}(\omega,E) < \infty$ on a set of positive measure $\mathbb{P}$; in particular, letting
$\Omega_{0}:=\{\omega: S_{+}(T^{n}\omega,E) < \infty ~\mbox{for some} ~n \in \mathbb{Z}\}$ we obtain $\mathbb{P}(\Omega_{0}) > 0$. 
In fact, since $T^{-1}\Omega_{0} = \Omega_{0}$, we have $\mathbb{P}(\Omega_{0}) = 1$ using ergodicity of $T$.

Let $\omega \in \Omega_{0}$ and $n \in \mathbb{Z}$ such that $S_{+}(T^{n} \omega,E) < \infty$. 
We mention that for $n=0$, we already obtain the statement using a standard argument from the the theory of Borel transforms.  

Using Lemma \ref{lem_1} for $z=E+i\epsilon$ and $\epsilon>0$,
\begin{equation}
\dfrac{\im m_{+}(T^{n}\omega,z)}{\im z} = \dfrac{1}{\abs{a_{0}(T^{n}\omega)}^{2}} \norm{P_{+} \psi_{+}(T^{n}\omega,z)}^{2} ~\mbox{,}
\end{equation}
where we define  $\psi_{+}$ to satisfy the condition $\psi_{+,0}(T^{n}\omega,z):=1$.

Then, by Lemma \ref{lem_1} and Banach Alaoglu ($S_{+}(T^{n} \omega,E) < \infty$), we obtain a limiting solution $u_{+}(\omega,E)$ of the Schr\"odinger equation,
\begin{equation}
w-\lim_{m\to\infty} \psi_{+}(T^{n} \omega,E+i \epsilon_{m}) =: u_{+}(\omega,E) ~\mbox{,}
\end{equation}
for some sequence $(\epsilon_{m})$, $\epsilon_{m} \searrow 0$. By construction  $u_{+}(\omega,E)$ is $l^{2}$ at $+\infty$.  Making use of (\ref{eq_conswron}),  any solution of the
Schr\"odinger equation which is $l^{2}$ at $+\infty$ is uniquely determined up to a multiplicative constant. Since by construction $u_{+}(n)=1$,
this limiting solution is in fact independent of the sequence $(\epsilon_{m})$, whence
\begin{equation}
 u_{+}(\omega,E) = w-\lim_{\epsilon \to 0^{+}} \psi_{+}(T^{n} \omega,E+i \epsilon) ~\mbox{.}
\end{equation}

If $u_{+,m}(\omega,E) = 0$ for some $m \in \mathbb{Z}$, $u_{+}(\omega,E)$ is an eigenfunction of $H_{+; \omega}^{m}$. 
By Proposition \ref{prop_eigenvaluesofjacobi} this however happens with zero probability. 

Thus, $\mathbb{P}$-a.s. we have $u_{+,m}(\omega,E) \neq 0$, $\forall m \in \mathbb{Z}$. In particular, taking the limit in the definition of $m_{+}$, we 
obtain the claim.
\end{proof}

Following Deift and Simon \cite{I}, for $\omega \in \Omega$ we consider the following set
\begin{equation} \label{eq_singsuppset}
\mathfrak{S}_{\omega}:=\left\{E \in \mathbb{R} : \limsup_{\epsilon \to 0^{+}} \left\{ \im G_{\omega}(0,0;E+i\epsilon) + \im G_{\omega}(1,1;E+i\epsilon)\right\}=\infty \right\} ~\mbox{.}
\end{equation}
It is well known that the singular component of any spectral measure for $H_{\omega}$ is supported on $\mathfrak{S}_{\omega}$ (Theorem of de la Vall\'ee-Poussin; see also \cite{O}). 

$G(0,0)$ and $G(1,1)$ relate to the m-functions by
\begin{eqnarray} \label{eq_greensfunk0011}
-G(0,0;z)^{-1} & = & \abs{a_{0}}^{2} m_{+}(z) + \abs{a_{-1}}^{2} m_{-}(z) + z - b_{0} \nonumber \\
G(1,1;z)^{-1} & = & m_{+}(z)^{-1} + \abs{a_{0}}^{2} \{ \abs{a_{-1}}^{2} m_{-}(z) + z -b_{0}\}^{-1} ~\mbox{.}
\end{eqnarray}
These relations are obtained directly from the expression for the Green's function given in (\ref{eq_greens}).

The following statement extends the statement of Proposition \ref{prop_eigenvaluesofjacobi} to $\mathfrak{S}_{\omega}$, the natural supporting set
of the singular component of any spectral measure for $H_{\omega}$. The respective result was first proven for Schr\"odinger operators by Deift and Simon
\cite{I}.
\begin{theorem} \label{thm_singspec1}
Let $E \in \mathbb{R}$ such that $L(E)>0$. Then, $\mathbb{P}\{\omega:E \in \mathfrak{S}_{\omega}\}=0$.
\end{theorem}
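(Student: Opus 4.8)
The plan is to show that for $\mathbb{P}$-a.e.\ $\omega$ the diagonal Green's functions $G_\omega(0,0;\cdot)$ and $G_\omega(1,1;\cdot)$ admit finite boundary values at $E$, so that the $\limsup$ defining $\mathfrak{S}_\omega$ in (\ref{eq_singsuppset}) is finite and hence $E\notin\mathfrak{S}_\omega$. The input is Theorem \ref{thm_mfunk}: since $L(E)>0$, for $\mathbb{P}$-a.e.\ $\omega$ there is a non-vanishing solution $u_{+}(\omega,E)$ which is $\ell^2$ at $+\infty$, so $m_{+}(\omega,E+i0)$ exists, is finite and non-zero. Applying the same statement to the reflected (still ergodic) operator provides, for $\mathbb{P}$-a.e.\ $\omega$, a non-vanishing solution $u_{-}(\omega,E)$, $\ell^2$ at $-\infty$, with $m_{-}(\omega,E+i0)$ finite and non-zero. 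Because these are genuine weak limits, I may pass to the pointwise limits $u_{\pm,n}(\omega,E)=\lim_{\epsilon\to0^{+}}\psi_{\pm,n}(\omega,E+i\epsilon)$ for every $n$.

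Next I would feed these boundary solutions into the Green's function. Letting $\epsilon\to0^{+}$ in (\ref{eq_greens}) gives
\begin{equation*}
G_\omega(n,n;E+i\epsilon)\;\longrightarrow\;\frac{u_{-,n}(\omega,E)\,u_{+,n}(\omega,E)}{a_n\,W_n[u_-(\omega,E),u_+(\omega,E)]},\qquad n=0,1,
\end{equation*}
so both limits are finite as soon as the Wronskian in the denominator is non-zero. By the conservation law (\ref{eq_conswron}) and $a_n\neq0$, the condition $W_n[u_-,u_+]=0$ holds for one $n$ iff it holds for all $n$; since moreover $u_{\pm,n}\neq0$ for every $n$, this happens precisely when $u_-$ and $u_+$ are proportional, i.e.\ when their common multiple is a non-zero $\ell^2(\mathbb{Z})$ solution of $H_\omega\psi=E\psi$ — that is, exactly when $E$ is an eigenvalue of the double-sided operator $H_\omega$. (The same conclusion is reached from (\ref{eq_greensfunk0011}), where the vanishing of the denominators $\abs{a_0}^2 m_{+}+\abs{a_{-1}}^2 m_{-}+E-b_0$ and $\abs{a_{-1}}^2 m_{-}+E-b_0$ corresponds, respectively, to $E$ being an eigenvalue of $H_\omega$ and of the one-sided operator $H_{-}^{1}$.)

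The theorem then follows from Proposition \ref{prop_eigenvaluesofjacobi}: taking $n=\pm\infty$ there, the probability that the fixed energy $E$ is an eigenvalue of $H_\omega$ is zero, and the auxiliary one-sided events are discarded by the same proposition for finite $n$. Hence for $\mathbb{P}$-a.e.\ $\omega$ the Wronskian is non-zero, both $G_\omega(0,0;E+i0)$ and $G_\omega(1,1;E+i0)$ are finite, and therefore $E\notin\mathfrak{S}_\omega$, which is the assertion $\mathbb{P}\{\omega:E\in\mathfrak{S}_\omega\}=0$.

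The hard part is not the algebra but the passage to the boundary: one must know that $G_\omega(n,n;E+i\epsilon)$ possesses an honest finite limit (not merely a finite $\liminf$), and this rests entirely on the strengthened form of Theorem \ref{thm_mfunk}. Specifically, it is the $\mathbb{P}$-a.s.\ existence of the full boundary solutions $u_\pm$ together with the absence of zeros emphasized in Remark \ref{rem_strengthen} that lets me cancel the factors $u_{\pm,n}$ and identify the degenerate case cleanly with the eigenvalue event. Were zeros of $u_\pm$ not excluded, neither the reduction of the Green's function to the $m$-functions nor the matching of each vanishing denominator with the correct (one- or two-sided) eigenvalue problem covered by Proposition \ref{prop_eigenvaluesofjacobi} would go through; the remaining work is simply that bookkeeping.
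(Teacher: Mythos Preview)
Your proof is correct and follows essentially the same route as the paper: use Theorem~\ref{thm_mfunk} (and its reflected version) to obtain finite non-zero boundary values $m_{\pm}(\omega,E+i0)$ and non-vanishing limiting solutions $u_{\pm}$, observe that $G_\omega(0,0;E+i0)$ and $G_\omega(1,1;E+i0)$ can only blow up when $u_{+}$ and $u_{-}$ are proportional (i.e.\ $E$ is an eigenvalue of $H_\omega$), and invoke Proposition~\ref{prop_eigenvaluesofjacobi}. The only cosmetic difference is that the paper works through the $m$-function identities (\ref{eq_greensfunk0011}) rather than the Wronskian form (\ref{eq_greens}); your parenthetical already notes that these are equivalent, and your extra case $\abs{a_{-1}}^{2} m_{-}+E-b_{0}=0$ (which makes $G(1,1)$ vanish rather than diverge) is harmless.
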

\begin{proof}
By Theorem \ref{thm_mfunk}, we know that for $\mathbb{P}$-a.e. $\omega$, the limits $m_{\pm}(\omega,E+i0)$ both exist, are finite and non-zero. 
Using the relations (\ref{eq_greensfunk0011}) for such $\omega$, $E \in \mathfrak{S}_{\omega}$ if and only if at least one of the following happens
\begin{eqnarray}
\abs{a_{0}(\omega)}^{2} m_{+}(\omega,E+i0) + \abs{a_{-1}(\omega)}^{2} m_{-}(\omega,E+i0) + E - b_{0}(\omega) & = &  0  \nonumber ~\mbox{,} \\
m_{+}(\omega,E+i0)^{-1} + \abs{a_{0}(\omega)}^{2} \{ \abs{a_{-1}(\omega)}^{2} m_{-}(\omega,E+i0) + E - b_{0}(\omega)\}^{-1} & = & 0  \nonumber ~\mbox{.}
\end{eqnarray}
Using the definition of $m_{\pm}$, it can easily be verified that both these situations imply that the respective limiting solutions $u_{\pm}(\omega,E)$ constructed in the proof of 
Theorem \ref{thm_mfunk} are in fact constant multiples of one another. Thus we obtain a solution which is $l^{2}$ at both $\pm \infty$, i.e. an eigenvector of $H_{\omega}$. 
Employing Proposition \ref{prop_eigenvaluesofjacobi}, the set of such $\omega$ has zero probability, which yields the claim.
\end{proof}

Finally we obtain Theorem \ref{thm_singspec}, which concludes this section.
\begin{proof}[Proof of Theorem \ref{thm_singspec}]
Using the definition of the Lyapunov exponent, the set $\mathfrak{L}:=\{E \in \mathbb{R}:0<L(E)<\infty\}$ is measurable. 
Consider, $\mathfrak{N}:=\left\{(\omega,E) \in \Omega \times \mathfrak{L} : E \in \mathfrak{S}_{\omega} \right\}$. 
Letting $f(\omega,z):=\im G_{\omega}(0,0;z) + \im G_{\omega}(1,1;z)$, where $(\omega,z) \in \Omega \times \left( \mathfrak{L} + i \mathbb{R}^{+} \right)$, we note that
$\mathfrak{N}$ can be written as $\mathfrak{N} = \{  (\omega,E) \in \Omega \times \mathfrak{L} : \limsup_{\epsilon \to 0^{+}} f(\omega,E+i\epsilon) = \infty \}$. 
Since $f$ is measurable, we conclude measurability of $\mathfrak{N}$.

The rest now follows using Tonelli. By Theorem \ref{thm_singspec1}, $\forall \omega^{\prime} \in \Omega:$
\begin{eqnarray}
0 & = & \int_{\mathfrak{L}} \mathbb{P}\{ \omega: E \in \mathfrak{S}_{\omega}\} \ud \mu_{\omega^{\prime}} \nonumber \\
  & = & \int \mu_{\omega^{\prime}}(\mathfrak{L} \cap \mathfrak{S}_{\omega}) \ud \mathbb{P}(\omega) ~\mbox{,} \nonumber
\end{eqnarray}
which implies the claim. 
\end{proof}

\section{An application: Duality and extended Harper's model} \label{sec_dualityconj}

In this section we illustrate the use of Theorem \ref{thm_singspec} to obtain an expression for the Lyapunov exponent in the dual regime of extended Harper's model . The model as well as its relevance has been discussed in Sec. \ref{sec_intro}.

Similar to the almost Mathieu operator, the Hamiltonian in (\ref{eq_hamiltonian}) exhibits an intrinsic symmetry in the coupling constants under Fourier transform, known as {\em{duality}}. Heuristically speaking, duality relates certain regions in parameter space, mapping localized states to Bloch waves. 

To give a precise meaning of duality, on the Hilbert space $\mathcal{H}^{\prime}:=L^{2}\left([0,1) \times \mathbb{Z}\right)$, consider the unitary operator $U$ defined by
\begin{equation} \label{eq_defunitary}
(U \phi)(\eta,m):=\sum_{n \in \mathbb{Z}} \int \ud \theta \mathrm{e}^{i 2 \pi m \theta} 
\mathrm{e}^{i 2 \pi n (m\alpha + \eta)} \phi(\theta,n) ~\mbox{.}
\end{equation}
We note that such operator has originally been introduced by Chulaevsky-Delyon \cite{BB} and later employed in \cite{F}, both in context of the almost Mathieu operator. 

For $\lambda_2 \neq 0$, let $\sigma$ denote the map, $\sigma(\overline{\lambda}):=\frac{1}{\lambda_{2}}(\lambda_{3},1,\lambda_{1})$, where
as earlier $\overline{\lambda}=(\lambda_{1},\lambda_{2},\lambda_{3})$ represents the coupling in extended Harper's model (see (\ref{eq_hamiltonian1})). 

We also define the following regions in parameter space:
\begin{description}
\item[region I] $0 \leq \lambda_{1}+\lambda_{3} \leq 1, ~0 \leq \lambda_{2} \leq 1$ ~\mbox{,}
\item[region II] $0 \leq \lambda_{1}+\lambda_{3} \leq \lambda_{2}, ~1 \leq \lambda_{2} $ ~\mbox{,}
\item[region III] $\max\{1,\lambda_{2}\} \leq \lambda_{1}+\lambda_{3}$ ~\mbox{.}
\end{description}

This partitioning is induced by the map $\sigma$ since
\begin{equation} \label{eq_dualityofregions}
\sigma(I^\circ) = II^\circ ~, ~\sigma(III) = III ~\mbox{.}
\end{equation}
In this sense, the interiors of region I and II are dual, whereas region III is self-dual. For a more general discussion of duality we refer to \cite{D}.

In \cite{E} (see Theorem 1 therein) it was proven that for Diophantine $\alpha$ and $\overline{\lambda} \in I^\circ$, the spectrum of $H_{\overline{\lambda},\theta;\alpha}$ 
is only pure point (i.e. for a certain set  $\mathbb{T}_{0}$ of phases $\theta$ with $\vert \mathbb{T}_{0} \vert =1$). Recall, $\alpha \in \mathbb{R}$ is called Diophantine if there exists $0 < b(\alpha)$ and $1 < r(\alpha)< + \infty$ s.t. for all $j \in \mathbb{Z}\setminus \{0\}$
\begin{equation} \label{eq_diophantine}
\abs{\sin(2\pi j \alpha)} > \dfrac{b(\alpha)}{\abs{j}^{r(\alpha)}} ~\mbox{.}
\end{equation}
It is known that $DC(r):=\{\alpha: \mbox{(\ref{eq_diophantine}) holds for} ~$r$ ~\mbox{and some} ~$b$\}$ is a set of full Lebesgue measure for every $r \geq 1$.

As we will argue, Theorem \ref{thm_singspec} will allow to relate the regions I and II establishing the following:
\begin{theorem} \label{thm_duality}
For every Diophantine $\alpha$, the LE in region II is zero on the spectrum. In region I, the Lyapunov exponent on the spectrum is positive and given by the formula,
\begin{equation} \label{eq_thoulessform}
L(\alpha; \overline{\lambda}, E) = \begin{cases}\log \left(  \dfrac{1+\sqrt{1 - 4\lambda_{1} \lambda_{3}}}{2 \lambda_{1}}\right) & \mbox{, if} ~\lambda_{1} \geq \lambda_{3}, ~\lambda_{2} \leq \lambda_{3} + \lambda_{1} ~\mbox{,}  \\
\log \left(  \dfrac{1+\sqrt{1 - 4\lambda_{1} \lambda_{3}}}{2 \lambda_{3}}\right) & \mbox{, if} ~\lambda_{3} \geq \lambda_{1}, ~\lambda_{2} \leq \lambda_{3} + \lambda_{1} ~\mbox{,}  \\
\log \left(  \dfrac{1+\sqrt{1 - 4\lambda_{1} \lambda_{3}}}{\lambda_{2} + \sqrt{\lambda_{2}^{2} - 4 \lambda_{1} \lambda_{3}}}\right) & ~\mbox{, if} ~\lambda_{2} \geq \lambda_{3} + \lambda_{1} ~\mbox{.} 
\end{cases}
\end{equation}
\end{theorem}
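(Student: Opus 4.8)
The plan is to exploit Aubry duality to convert the localization result of \cite{E} in region I into a statement about the Lyapunov exponent in region II, and then to combine the Thouless formula with the duality invariance of the integrated density of states (IDS) to extract the explicit formula \eqref{eq_thoulessform}. Throughout I fix a Diophantine $\alpha$ and regard $\{H_{\overline{\lambda},\theta;\alpha}\}_{\theta}$ as an ergodic family over the phase $\theta$, with $T$ the rotation by $\alpha$; unique ergodicity of $T$ lets me replace Birkhoff averages by integrals over $\mathbb{R}/\mathbb{Z}$, so that $\mathbb{E}_{\theta}\log\abs{a}=\int_{0}^{1}\log\abs{c_{\overline{\lambda}}(x)}\,\ud x$. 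The first step is a direct computation showing that the unitary $U$ of \eqref{eq_defunitary} intertwines the two direct integrals, $U\left(\int^{\oplus}H_{\overline{\lambda},\theta;\alpha}\,\ud\theta\right)U^{*}=\lambda_{2}\int^{\oplus}H_{\sigma(\overline{\lambda}),\eta;\alpha}\,\ud\eta$, for $\overline{\lambda}\in I^{\circ}$ (so $\sigma(\overline{\lambda})\in II^{\circ}$ by \eqref{eq_dualityofregions}); the scalar $\lambda_{2}$ records that $\sigma$ renormalizes the middle coupling, and since $\sigma$ is an involution this also covers all of $II^{\circ}$. In particular $\mathrm{spec}(H_{\overline{\lambda},\theta})=\lambda_{2}\,\mathrm{spec}(H_{\sigma(\overline{\lambda}),\eta})$ for a.e. phases, and $\ud k_{\overline{\lambda}}$ is the pushforward of $\ud k_{\sigma(\overline{\lambda})}$ under multiplication by $\lambda_{2}$.

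Second, I would show that the Lyapunov exponent vanishes on the spectrum in region II. By \cite{E}, for $\overline{\lambda}\in I^{\circ}$ and a.e. $\theta$ the operator $H_{\overline{\lambda},\theta}$ has pure point spectrum with exponentially decaying eigenfunctions, whose eigenvalues are simple (Proposition~\ref{prop_pointsp}) and, by Theorem~\ref{thm_singspec}, mutually disjoint across $\theta$ on $\{L>0\}$. Under $U$ each such localized eigenfunction at phase $\theta$ is mapped to a polynomially bounded generalized eigenfunction of the dual operator, i.e. an extended (Bloch-type) state for $H_{\sigma(\overline{\lambda}),\eta}$ at the corresponding energy; the disjointness of the eigenvalue branches $E_{n}(\theta)$ makes the resulting dual spectral measure absolutely continuous. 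Now suppose, for contradiction, that $\mathrm{spec}(H_{\sigma(\overline{\lambda}),\eta})\cap\{E:L(E)>0\}$ carried positive spectral weight for a positive-measure set of $\eta$. On $\{L>0\}$ Kotani theory (through \eqref{eq_lyap} and Theorem~\ref{thm_singspec1}) excludes absolutely continuous spectrum, so this part of the spectrum would be purely singular, and Theorem~\ref{thm_singspec} would force these singular parts to be mutually singular as $\eta$ varies; but the extended states produced by duality realize the same fixed spectral set for (a.e.) every $\eta$, which is incompatible with mutual singularity. Hence $\mathrm{spec}(H_{\sigma(\overline{\lambda}),\eta})\subseteq\{L=0\}$, i.e. $L\equiv 0$ on the spectrum in region II.

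Third, with $L_{\sigma(\overline{\lambda})}\equiv 0$ on $\mathrm{spec}(H_{\sigma(\overline{\lambda})})$ in hand, I would invoke the Thouless formula for Jacobi matrices — valid here because \eqref{eq_Jacobiop_2} guarantees $\mathbb{E}\log\abs{a}$ finite — in the form $L(E)=\int\log\abs{E-E'}\,\ud k(E')-\int_{0}^{1}\log\abs{c_{\overline{\lambda}}(x)}\,\ud x$. Applied in region II this gives $\int\log\abs{E-E'}\,\ud k_{\sigma(\overline{\lambda})}(E')=\int_{0}^{1}\log\abs{c_{\sigma(\overline{\lambda})}(x)}\,\ud x$ for $E\in\mathrm{spec}(H_{\sigma(\overline{\lambda})})$. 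Feeding this, together with the change of variables $E'=\lambda_{2}E''$ coming from the IDS relation, into the Thouless formula for region I collapses the $E$-dependent potential-theoretic integral and yields the $E$-independent expression
\begin{equation}
L_{\overline{\lambda}}(E)=\log\lambda_{2}+\int_{0}^{1}\log\abs{c_{\sigma(\overline{\lambda})}(x)}\,\ud x-\int_{0}^{1}\log\abs{c_{\overline{\lambda}}(x)}\,\ud x,\qquad E\in\mathrm{spec}(H_{\overline{\lambda}}).
\end{equation}
The remainder is purely computational: writing $c_{\overline{\lambda}}(x)=\mathrm{e}^{-2\pi i(x+\alpha/2)}(\lambda_{1}w^{2}+\lambda_{2}w+\lambda_{3})$ with $w=\mathrm{e}^{2\pi i(x+\alpha/2)}$ on the unit circle and applying Jensen's formula gives $\int_{0}^{1}\log\abs{c_{\overline{\lambda}}}=\log\lambda_{1}+\log^{+}\abs{w_{1}}+\log^{+}\abs{w_{2}}$ (where $\log^{+}t:=\log\max\{1,t\}$) in terms of the roots $w_{1,2}$ of $\lambda_{1}w^{2}+\lambda_{2}w+\lambda_{3}$, and likewise for $\sigma(\overline{\lambda})$ with the roots of $\lambda_{3}w^{2}+w+\lambda_{1}$. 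Determining which roots lie outside the unit disk under the region I constraints $0\leq\lambda_{1}+\lambda_{3}\leq 1$, $0\leq\lambda_{2}\leq 1$, and the three subcases, reproduces \eqref{eq_thoulessform}; the same inequalities show the resulting quantity is strictly positive, giving $L>0$ on the spectrum in region I (with the boundary of region I handled by continuity of $L$).

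I expect the main obstacle to be the second step, the rigorous passage from region I localization to $L\equiv 0$ in region II. The delicate points are (i) showing that $U$ sends the $\ell^{2}$, exponentially localized eigenfunctions to \emph{genuinely bounded}, non-decaying solutions of the dual equation — this is exactly where analyticity of $c$ and $v$ together with the Diophantine bound \eqref{eq_diophantine} are needed, to guarantee that the series defining the dual state converges to a non-degenerate function — and (ii) converting this abundance of extended states into the absence of singular spectrum on $\{L>0\}$ via Theorem~\ref{thm_singspec}, which requires a measurable selection of the eigenvalue branches $E_{n}(\theta)$ and a careful application of mutual singularity across phases. By contrast, the Thouless--Jensen bookkeeping of the third step, while lengthy, is routine.
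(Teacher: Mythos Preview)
Your overall strategy coincides with the paper's: duality via $U$, localization in region~I from \cite{E}, Theorem~\ref{thm_singspec} to eliminate singular spectrum on $\{L>0\}$ in region~II, Pastur--Ishii to eliminate the a.c.\ part there, continuity of $L$ (jointly in $E$ and $\overline{\lambda}$, together with Hausdorff continuity of the spectrum) to reach the boundary, and then the Thouless formula combined with IDS invariance under $\sigma$ and Jensen's formula to produce \eqref{eq_thoulessform}.

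The one place where your write-up is looser than the paper is the heart of step two. You first assert that ``disjointness of the eigenvalue branches $E_{n}(\theta)$ makes the resulting dual spectral measure absolutely continuous'', and then run a contradiction argument based on the dual states realizing ``the same fixed spectral set for (a.e.) every $\eta$''. Neither formulation quite closes the argument: two singular measures can share the same support, so ``same spectral set'' does not by itself contradict mutual singularity, and deducing absolute continuity directly from disjointness of the branches would itself require justification. The paper's route, imported from \cite{F}, is sharper: using the measurable labeling (your point~(ii), which is Theorem~\ref{thm_lana1} here), one shows that the spectral measure $\ud\mu_{\eta}$ of $\psi_{\eta}=(U\phi)(\eta,\cdot)$ is literally \emph{$\eta$-independent} for a.e.\ $\eta$ (Theorem~\ref{thm_lana2}). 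Identical nonzero measures cannot be mutually singular, so Theorem~\ref{thm_singspec} forces $\ud\mu_{\eta,s}\!\upharpoonright_{\{L>0\}}=0$; Pastur--Ishii then removes the a.c.\ part on $\{L>0\}$, whence $\mathrm{supp}\,\ud n\subseteq\overline{\{L=0\}}$ and continuity of $L$ finishes. So the mechanism you want is ``$\ud\mu_{\eta}$ is constant in $\eta$'', not ``$\ud\mu_{\eta}$ is a.c.'' or ``the supports agree''. With that correction your outline matches the paper's proof.
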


$H$ in (\ref{eq_hamiltonian}) naturally induces a bounded self adjoint operator on $\mathcal{H}^{\prime}$, which shall be denoted by $H_{\overline{\lambda};\alpha}^{\prime}$,\begin{equation}
H_{\overline{\lambda};\alpha}^{\prime} \phi := H_{\overline{\lambda};\theta, \alpha} \phi(\theta, .) ~, ~\forall \theta \in [0,1) ~\mbox{.}
\end{equation}

Using the definition of the unitary $U$, 
direct computation establishes the following relation between dual regions:
\begin{equation} \label{eq_duality}
U H_{\overline{\lambda};\alpha}^{\prime} U^{-1} = \lambda_{2} H_{\sigma(\overline{\lambda});\alpha}^{\prime} ~\mbox{.}
\end{equation}
To avoid confusion, we emphasize that (\ref{eq_duality}) shows that $H_{\overline{\lambda};\alpha,}^{\prime}$ and 
$H_{\sigma{(\overline{\lambda})};\alpha}^{\prime}$ are  unitarily equivalent and not the respective 
operators on the smaller space $\mathcal{H}=\mathit{l}^2(\mathbb{Z})$. 

An important property of duality is that preserves the density of states \cite{D}. To see this in the present formulation notice that
\begin{equation} \label{eq_invdos1}
U \delta_{0} = \delta_{0} ~\mbox{,}
\end{equation}
where $\delta_{0}$ is viewed as element of $\mathcal{H}^{\prime}$, i.e. employing the embedding $\mathcal{H} \hookrightarrow \mathcal{H}^\prime$
\begin{equation}
(u_n) \mapsto (\phi(\theta, n)) ~\mbox{, where} ~\phi(\theta,n) = u_n ~\forall n \in \mathbb{Z} ~\mbox{,} ~\theta \in [0,1) ~\mbox{.}
\end{equation}

Denote by $\ud n(\left\{H_{\overline{\lambda};\theta, \alpha}\right\}; E)$ the density of states of the ergodic family $\left\{H_{\overline{\lambda};\theta, \alpha}\right\}$ defined as usual by
\begin{equation}
\mathbb{E}_{\theta} \left\langle  \delta_{0}, f(H_{\overline{\lambda},\theta;\alpha}) \delta_{0} \right\rangle =: \int f(E) \ud n(\left\{H_{\overline{\lambda};\theta, \alpha}\right\}; E)    ~\mbox{,}
\end{equation}
for $f$ continuous and compactly supported on $\mathbb{R}$. Then, combining (\ref{eq_duality}) and (\ref{eq_invdos1}), we obtain the desired invariance of the density of states,
\begin{equation} \label{eq_inv}
\ud n\left(\left\{H_{\overline{\lambda},\theta;\alpha}\right\}; E\right) = \ud n\left( \left\{ \lambda_{2}  H_{\sigma(\overline{\lambda}),\theta;\alpha}\right\}; E\right) = \ud n\left( \left\{H_{\sigma(\overline{\lambda}),\theta;\alpha}\right\}; \lambda_2^{-1} E\right)
 ~\mbox{.}
\end{equation}
We mention that the last equality in (\ref{eq_inv}) follows from the spectral mapping theorem.

In \cite{F} it is shown that fixing $\overline{\lambda}$ and $\alpha$, it is possible to find a labeling of the eigenfunctions 
of $H_{\overline{\lambda},\theta;\alpha}$ that produces measurable functions w.r.t. the random phase $\theta$. 

The proof in \cite{F} (Theorem 2.2) is given for an arbitrary bounded, ergodic Schr\"odinger operator on $l^{2}(\mathbb{Z})$, the crucial property needed being however only simplicity of the point spectrum. 
As shown in Proposition \ref{prop_pointsp}, the same is true for Jacobi operators. Therefore, Theorem 2.2. in \cite{F} extends to an ergodic family $H_{\omega}$ of Jacobi operators, as defined in Sec. \ref{sec_kotani}.

A measurable labeling of eigenfunctions is obtained as follows:  For $\omega \in \Omega$, let $u$ be a normalized  eigenvector of $H_{\omega}$.  Pick the unique $j \in \mathbb{Z}$ such
that $\abs{u(j)} \geq \abs{u(k)}, ~\forall k$ and $\abs{u(j)} > \abs{u(k)}, ~\forall k < j$ (``left-most maximum''). We say $u$ is attached to $j$. Let $N_{j}(\omega)$ 
be the number of eigenfunctions attached to $j$, and for $j,k \in \mathbb{Z}$ set $\Omega_{j,k}:=\{\omega : N_{j}(\omega) \geq k \}$. On $\Omega_{j,1}$, let 
$(u_{1}(n;\omega,j))$ be the eigenfunction attached to $j$ with maximal $j$th component (in magnitude) and $e_{1}(\omega,j)$ its associated
eigenvalue. If multiple eigenfunctions attached to $j$ have the same magnitude of their $j$th entry, discriminate them by choosing the one with largest eigenvalue.
Note that since the point spectrum is simple, there can only be finitely many different eigenfunctions attached to $j$ whose $j$th components coincide in magnitude.  
For further details we refer to \cite{F}.

For this labeling of eigenfunction we have (see Sec. 2 in
\cite{F}),
\begin{theorem} \label{thm_lana1}
Let $H_{\omega}$ be an ergodic family of Jacobi operators defined as in Sec. \ref{sec_kotani}, and $N_{j}(\omega), e_{l}(\omega,j)$, and $(u_{l}(n;\omega,j))_{n \in \mathbb{Z}}$
defined as above. Then, fixing $l, j$ and $n$,  $N_{j}(\omega), e_{l}(\omega,j)$, and $u_{l}(n;\omega,j)$ are measurable functions on $\Omega$.
\end{theorem}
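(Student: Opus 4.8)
The plan is to reduce the entire statement to the measurable dependence of the spectral data of $H_\omega$ on $\omega$, and then to realize the combinatorial labeling of eigenfunctions through countably many measurable operations, the key structural input being simplicity of the point spectrum (Proposition~\ref{prop_pointsp}).

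First I would verify that $(H_\omega)$ is a \emph{measurable family}: since the matrix entries $a_n(\omega)=a(T^n\omega)$ and $b_n(\omega)=b(T^n\omega)$ are measurable by (\ref{eq_stationarity}), the map $\omega\mapsto\langle\delta_m,H_\omega\delta_n\rangle$ is measurable for all $m,n$, and hence so is $\omega\mapsto\langle\delta_m,(H_\omega-z)^{-1}\delta_n\rangle$ for each fixed $z\in\mathbb{C}\setminus\mathbb{R}$ (e.g.\ by Cramer's rule on finite truncations together with strong resolvent convergence). By Stone's formula the matrix elements $\omega\mapsto\langle\delta_m,P_{(a,b]}(H_\omega)\delta_n\rangle$ of the spectral projections are then measurable for all rational $a<b$, and the same holds for the pure point projection $P_{\mathrm{pp}}(H_\omega)$ and for the eigenprojection onto a fixed energy, recovered as the strong limit $P_{\{E\}}(H_\omega)=\lim_{\epsilon\downarrow 0}(-i\epsilon)(H_\omega-E-i\epsilon)^{-1}$. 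Simplicity (Proposition~\ref{prop_pointsp}) guarantees that $P_{\{E\}}(H_\omega)=|u_E\rangle\langle u_E|$ is rank one whenever $E$ is an eigenvalue, so in particular $|u_E(n)|^2=\langle\delta_n,P_{\{E\}}(H_\omega)\delta_n\rangle$; the entire attachment-and-magnitude data of an eigenvector is thus encoded in the measurable diagonal of its spectral projection.

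Next I would reconstruct $N_j$, $e_l(\cdot,j)$ and $u_l(n;\cdot,j)$ from this data. Fixing $j$ and working over the countable family of intervals $I$ with rational endpoints, the diagonal entries $\langle\delta_n,P_I(H_\omega)P_{\mathrm{pp}}(H_\omega)\delta_n\rangle=\sum_{E\in I}|u_E(n)|^2$ are measurable; refining $I$ along a sequence of rational partitions whose mesh tends to zero isolates, in the limit, the contribution of each individual eigenvalue, and for an eigenvalue so isolated the condition that its eigenfunction be attached to $j$ (left-most maximum at $j$) and have $|u_E(j)|$ in a prescribed rational range becomes a countable boolean combination of measurable inequalities among the numbers $\langle\delta_n,P_I(H_\omega)\delta_n\rangle$. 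Counting such eigenvalues yields $N_j(\omega)$ as a measurable extended-integer-valued function, whence each $\Omega_{j,k}$ is measurable; ordering the eigenfunctions attached to $j$ by decreasing $|u_E(j)|$, with ties broken by decreasing eigenvalue, then selects $e_l(\omega,j)$ and $u_l(n;\omega,j)$ measurably, the phase of $u_l$ being fixed by a measurable normalization such as requiring its $j$th component to be positive.

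The step I expect to be the main obstacle is the possible \emph{accumulation of eigenvalues}: there need be no uniform interval isolating a given eigenvalue, so the isolation-by-refinement above must be controlled by a genuine limiting argument rather than a single partition. Here the saving device is Bessel's inequality, $\sum_{E}|u_E(j)|^2=\sum_E|\langle\delta_j,u_E\rangle|^2\le\|\delta_j\|^2=1$, which shows that for every $\epsilon>0$ only finitely many eigenfunctions attached to $j$ satisfy $|u_E(j)|\ge\epsilon$; this finiteness both renders the enumeration and the tie-breaking well defined (the set of ties is finite, as already noted in the text) and guarantees that the refinement procedure stabilizes on the relevant finitely many eigenvalues above any threshold. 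Since the construction uses only the measurability of spectral projections and simplicity of the point spectrum --- exactly the two ingredients invoked in \cite{F}, the latter now supplied by Proposition~\ref{prop_pointsp} --- the argument of Theorem~2.2 of \cite{F} carries over to the present Jacobi setting.
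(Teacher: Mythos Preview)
Your proposal is correct and aligns with the paper's approach: the paper does not give an independent proof of this theorem but simply observes that the proof of Theorem~2.2 in \cite{F} carries over verbatim once one knows the point spectrum is simple (Proposition~\ref{prop_pointsp}), which is precisely the conclusion you reach after expanding the argument in detail. Your sketch is thus a fleshed-out version of what the paper leaves as a citation; the only minor wrinkle is that in the present complex Jacobi setting the phase normalization ``$j$th component positive'' should be read as, say, ``$j$th component real and positive,'' but this is cosmetic.
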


The next statement shows that applying the unitary $U$ defined in (\ref{eq_defunitary}), we obtain spectral measures that are constant for a.e. phase. 
The proof carries over from \cite{F} (see Theorem 3.4 therein) without any modification.
\begin{theorem} \label{thm_lana2}
Fix $\overline{\lambda}$ and $\alpha$ Diophantine and let $(u_{l}(n;\theta,j))$ be the measurable labeling of eigenfunctions for $H_{\sigma{(\overline{\lambda})},\theta;\alpha}$
described in Theorem \ref{thm_lana1}. For $f(\theta) \in L^{2}([0,1],\ud \theta)$ arbitrary and some fixed $l,j \in \mathbb{Z}$, set $\phi(\theta,n):=f(\theta) u_{l}(n;\theta,j)$.
Denote by $\ud \mu_{\eta}$ the spectral measure for $H_{\overline{\lambda},\theta;\alpha}$ and $\psi_{\eta}(n):=(U\phi)(\eta,n)$. Then. $\ud \mu_{\eta}$ is
$\eta$ independent for a.e. $\eta$.
\end{theorem}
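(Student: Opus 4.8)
The plan is to transport the fiberwise eigenstructure of the dual operator through the unitary $U$ and then use ergodicity of the irrational rotation to strip off the $\eta$-dependence. First I would record the elementary facts: since each $u_{l}(\cdot;\theta,j)$ is a normalized eigenvector, $\sum_{n}\int \abs{f(\theta)}^{2}\abs{u_{l}(n;\theta,j)}^{2}\,\ud\theta=\int\abs{f(\theta)}^{2}\,\ud\theta<\infty$, so $\phi\in\mathcal{H}^{\prime}$, and its measurability in $\theta$ follows from Theorem \ref{thm_lana1}. Moreover $\phi$ is a \emph{fiberwise} eigenfunction of $\lambda_{2}H^{\prime}_{\sigma(\overline{\lambda});\alpha}$: because this operator acts as $H_{\sigma(\overline{\lambda}),\theta;\alpha}$ in the fiber over $\theta$ and $u_{l}(\cdot;\theta,j)$ is an eigenvector with eigenvalue $e_{l}(\theta,j)$, we obtain $(\lambda_{2}H^{\prime}_{\sigma(\overline{\lambda});\alpha}\phi)(\theta,n)=\lambda_{2}e_{l}(\theta,j)\,\phi(\theta,n)$; that is, $\lambda_{2}H^{\prime}_{\sigma(\overline{\lambda});\alpha}$ acts on $\phi$ as multiplication by the measurable function $\lambda_{2}e_{l}(\theta,j)$.

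Next I would compute the spectral measure of $\psi=U\phi$ with respect to $H^{\prime}_{\overline{\lambda};\alpha}$ on the large space $\mathcal{H}^{\prime}$. Since $U$ is unitary and, by (\ref{eq_duality}), $U H^{\prime}_{\overline{\lambda};\alpha}U^{-1}=\lambda_{2}H^{\prime}_{\sigma(\overline{\lambda});\alpha}$, functional calculus yields $\langle U\phi,\,g(H^{\prime}_{\overline{\lambda};\alpha})U\phi\rangle=\langle\phi,\,g(\lambda_{2}H^{\prime}_{\sigma(\overline{\lambda});\alpha})\phi\rangle$ for every bounded Borel $g$. Using the eigenfunction property together with $\norm{u_{l}(\cdot;\theta,j)}=1$, the right-hand side collapses to $\int g(\lambda_{2}e_{l}(\theta,j))\,\abs{f(\theta)}^{2}\,\ud\theta$. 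Hence the spectral measure of $\psi$ in $\mathcal{H}^{\prime}$ is the pushforward $\nu:=(\lambda_{2}e_{l}(\cdot,j))_{\ast}(\abs{f}^{2}\,\ud\theta)$, a single measure on $\mathbb{R}$ that is manifestly free of the fiber variable $\eta$.

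Now $H^{\prime}_{\overline{\lambda};\alpha}$ is the direct integral $\int^{\oplus}H_{\overline{\lambda},\eta;\alpha}\,\ud\eta$ and $\psi=\int^{\oplus}\psi_{\eta}\,\ud\eta$, so $\nu$ disintegrates as $\nu=\int_{0}^{1}\ud\mu_{\eta}\,\ud\eta$, where $\ud\mu_{\eta}$ is exactly the fiber measure in the statement. To promote this to $\eta$-independence I would exploit the covariance built into $U$. A direct manipulation of (\ref{eq_defunitary}) shows that the shifted vector $(S^{-1}\psi_{\eta})(m)=\psi_{\eta}(m+1)$ is precisely the $(\eta+\alpha)$-fiber of $U(\mathrm{e}^{2\pi i\theta}\phi)$; combined with the magnetic covariance $H_{\overline{\lambda},\eta+\alpha;\alpha}=S^{-1}H_{\overline{\lambda},\eta;\alpha}S$ (a direct consequence of (\ref{eq_hamiltonian})) and the unitarity of $S$, this yields the identity $\ud\mu_{\eta}[f]=\ud\mu_{\eta+\alpha}[\mathrm{e}^{2\pi i\theta}f]$ relating neighbouring fibers, where $\ud\mu_{\eta}[f]$ makes the dependence on $f$ explicit. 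Since $\abs{\mathrm{e}^{2\pi i\theta}f}=\abs{f}$, the remaining task is to absorb the phase factor and close the loop with ergodicity of $\eta\mapsto\eta+\alpha$ (here $\alpha$ irrational is essential; the Diophantine hypothesis enters only through the localization result of \cite{E} that makes the $u_{l}$ honest eigenvectors), concluding that $\eta\mapsto\ud\mu_{\eta}$ is a.e. constant.

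I expect this last step to be the main obstacle. Identifying the total measure $\nu$ is routine once the eigenfunction property is in hand, but passing from ``$\nu$ is $\eta$-independent'' to ``each fiber $\ud\mu_{\eta}$ is $\eta$-independent'' is genuinely delicate: the fiber operators $H_{\overline{\lambda},\eta;\alpha}$ truly depend on $\eta$, so constancy cannot be read off from the total measure and must be extracted from the ergodic covariance above, carefully tracking the interplay of the $n$-shift $S$, the phase rotation $\mathrm{e}^{2\pi i\theta}$ in $\theta$, and the rotation by $\alpha$ in $\eta$. This is exactly the mechanism carried out for the almost Mathieu operator in \cite{F} (Theorem 3.4); by simplicity of the point spectrum (Proposition \ref{prop_pointsp}) and the measurable labeling of Theorem \ref{thm_lana1}, that argument transfers to the present Jacobi setting.
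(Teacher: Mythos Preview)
Your proposal is correct and follows the same route as the paper: the paper gives no independent argument and simply states that the proof carries over from \cite{F} (Theorem 3.4) without modification, which is exactly where your sketch lands after unpacking the duality, the fiberwise eigenfunction property, and the shift--rotation covariance of $U$. Your elaboration of the mechanism (the pushforward $\nu$, the identity $S^{-1}\psi_{\eta}=\psi_{\eta+\alpha}[\mathrm{e}^{2\pi i\theta}f]$, and the ergodicity step) is accurate, and your identification of the phase-absorption step as the delicate point---deferred to \cite{F}---matches the paper's own deferral.
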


Equipped with Theorem \ref{thm_lana2}, we are now ready to prove Theorem \ref{thm_duality}.
\begin{proof}[Proof of Theorem \ref{thm_duality}]
For a fixed Diophantine frequency $\alpha$, let $\mathfrak{L}_{\overline{\lambda}}^{+}:=\{E \in \mathrm{spec}: L(\alpha; \overline{\lambda}, E) > 0\}$.
We claim that $\mathfrak{L}_{\overline{\lambda}}^{+} = \emptyset $ for any $\overline{\lambda}$ in region II. 

First consider $\overline{\lambda} \in II^\circ$ and let $\ud \mu_{\eta}$ be defined according to Theorem \ref{thm_lana2}. Then, since $\ud \mu_{\eta}$ is constant a.e. $\eta$, Theorem \ref{thm_singspec} implies that in fact $\ud \mu_{\eta}$ is purely a.c. on $\mathfrak{L}_{\overline{\lambda}}^{+}$ for a.e. $\eta$. Hence, using a well known argument by Pastur-Ishii (\cite{B,C}; see also the proof of Theorem 9.13 in \cite{M}) we obtain $P_\eta^{\mathrm{ac}}(\mathfrak{L}_{\overline{\lambda}}^{+}) = P_\eta(\mathfrak{L}_{\overline{\lambda}}^{+}) = 0$; here $P_\eta(B)$ and $P_\eta^{\mathrm{ac}}(B)$ denotes the spectral projection associated with a Borel set $B$ and its restriction to the ac subspace of $H_\eta$, respectively. In particular, in terms of the density of states this implies
 \begin{equation*}
 \mathrm{spec} = \mathrm{supp} \left\{\ud n\left(\left\{H_{\overline{\lambda},\theta;\alpha}\right\}\right) \right\} \subseteq \overline{\mathbb{R} \setminus \mathfrak{L}_{\overline{\lambda}}^{+}} ~\mbox{,}
\end{equation*}
which by continuity of the Lyapunov exponent w.r.t. to $E$ \cite{N, A} yields the claim for $\overline{\lambda} \in II^\circ$. To obtain the claim for all off region II, 
i.e. including the boundary, we employ the following continuity argument: 

Let $\overline{\lambda}$ be an arbitrary point in $\partial II$ and $E$ a fixed energy in the spectrum of $H_{\overline{\lambda}}$.  Using continuity of the spectrum in the Hausdorff metric \footnote{It is a well known fact that given a sequence $(A_n)$ of bounded self adjoint operators approximating a bounded operator $A$ in {\em{norm topology}}, the spectra satisfy $\mathrm{spec}(A_n) \to \mathrm{spec}(A)$ in the Hausdorff metric.}, there exists a sequence $(\overline{\lambda}_n)$, approximating $\overline{\lambda}$ from within $II^\circ$, and a corresponding sequence of energies $(E_n)$, with $E_n$ in the spectrum of $H_{\overline{\lambda}_n}$, such that $E_n \to E$. Since the Lyapunov exponent is jointly continuous at $(E,\overline{\lambda})$ for a fixed Diophatine $\alpha$ \cite{N}, we obtain $L(\alpha; \overline{\lambda}, E) = 0$, as claimed.

Making use of Thouless' formula and (\ref{eq_inv}), zero Lyapunov exponent in region II immediately implies an expression for the dual region since
\begin{eqnarray} \label{eq_finalproof}
L(\alpha; \overline{\lambda}, E) & = & - \int \log \abs{c_{\overline{\lambda}}(x)} \ud x + \int \log\abs{E-E^{\prime}} \ud n\left(\left\{H_{\overline{\lambda},\theta;\alpha}\right\}; E^\prime \right) \nonumber \\
 & = & \int \log \left \vert \frac{\lambda_{2} c_{\sigma(\overline{\lambda})}(x)}{c_{\overline{\lambda}}(x)} \right \vert \ud x+ L(\alpha; \sigma(\overline{\lambda}), \lambda_2^{-1} E) ~\mbox{.}
\end{eqnarray}
Here, $c_{\overline{\lambda}}$ is defined as in (\ref{eq_hamiltonian1}). 

Using Jensen's formula, the integral in (\ref{eq_finalproof}) can be computed explicitly \cite{E} (see Sec. 2, p. 107), 
\begin{equation} \label{eq_integral}
\int \log \abs{c_{\overline{\lambda}}(x)} \ud x = \begin{cases} \log \lambda_{3} & \mbox{if} ~\lambda_{3}  \geq  \lambda_{1}  \geq 0 \\ & ~\mbox{and} ~\lambda_{1}  + \lambda_{3} \geq \lambda_{2} \geq 0 ~\mbox{,} \\
\log{\lambda_{1}} & \mbox{if} ~\lambda_{1}  \geq \lambda_{3}  \geq 0  \\& ~\mbox{and} ~\lambda_{1} + \lambda_{3}  \geq \lambda_{2} \geq 0 ~\mbox{,} \\
\log \left \vert \dfrac{2\lambda_{1}\lambda_{3}}{-\lambda_{2}+\sqrt{\lambda_{2}^2 - 4\lambda_{1}\lambda_{3}}} \right \vert & \mbox{if} ~\lambda_{1} +\lambda_{3}  \leq \lambda_{2} ~\mbox{and} ~ \lambda_{1},\lambda_{3} \neq 0 ~\mbox{,}\\
\log \lambda_{2} & \mbox{if} ~\lambda_{1} + \lambda_{3}  \leq \lambda_{2} ~\mbox{,} ~ \lambda_{1} ~\mbox{or}  ~\lambda_{3} = 0 ~\mbox{,} \end{cases}
\end{equation}
giving rise to the expression for the Lyapunov exponent in region I given in (\ref{eq_thoulessform}).
\end{proof}

\bibliographystyle{amsplain}

\end{document}